\newtheorem{theorem}{Theorem}[section]
\newtheorem{definition}[theorem]{Definition}
\newtheorem{lemma}[theorem]{Lemma}
\newtheorem{corollary}[theorem]{Corollary}
\newtheorem{remark}[theorem]{Remark}
\def\F{\mathbb{F}}
\def\V{\mathcal{V}}
\def\Ll{\mathcal{L}}
\def\B{\mathcal{B}}
\def\R{\mathcal{R}}
\def\K{\mathcal{K}}
\def\Fq{\mathbb{F}_q}
\def\I{\mathcal{I}}
\DeclareMathOperator{\PGammaL}{P{\Gamma}L}
\DeclareMathOperator{\Aut}{Aut}
\newcommand{\comments}[1]{}
\newcommand{\gs}[3]{\genfrac{[}{]}{0pt}{0}{#1}{#2}_{#3}}
\begin{document}

\title[The existence of 3-designs over finite fields]{Necessary conditions for the existence of 3-designs over finite fields with nontrivial automorphism groups}

\author{Maarten De Boeck}
\address{UGent, Department of Mathematics, Krijgslaan 281-S22, 9000 Gent, Flanders, Belgium}
\email{mdeboeck@cage.ugent.be} 
\author{Anamari Naki\'c}
\address{University of Zagreb, Faculty of Electrical Engineering and Computing, Unska 3, 10000 Zagreb, Croatia}
\email{anamari.nakic@fer.hr}

\subjclass{05B05}

\keywords{$q$-design; tactical decomposition}

\date{September, 2015}

\begin{abstract}
 A $q$-design with parameters $t$-$(v,k,\lambda_{t})_q$ is a pair $(\V, \B)$ of the vector space $\V=\F^{v}_{q}$ and a collection $\B$ of $k$-dimensional subspaces of $\V$, such that each $t$-dimensional subspace of $\V$ is contained in precisely $\lambda_t$ members of $\B$. In this paper we give new general necessary conditions on the existence of designs over finite fields with parameters $3$-$(v, k , \lambda_3)_q$ with a prescribed automorphism group. These necessary conditions are based on a tactical decomposition of such a design over a finite field and are given in the form of equations for the coefficients of tactical decomposition matrices. In particular, they represent necessary conditions on the existence of $q$-analogues of Steiner systems admitting a prescribed automorphism group.
\end{abstract}

\maketitle

\section{Introduction}

\begin{definition}
  A \emph{$q$-design} with parameters $t$-$(v,k,\lambda_{t})_q$, or shorter a $t$-$(v,k,\lambda_t)_q$ design, with $v > k > 1, k \geq t \geq 1$, $\lambda_t \geq 1$, is a pair $(\V, \B)$ of the vector space $\V=\F^{v}_{q}$ and a collection $\B$ of $k$-dimensional subspaces of $\V$ (called blocks), such that each $t$-dimensional subspace of $\V$ is contained in precisely $\lambda_t$ blocks.
\end{definition}

If $\B$ consists of all the $k$-dimensional subspaces of $\V$, the design $(\V, \B)$ is called \emph{trivial}. Throughout this paper we shall call the $1$-spaces of $\V$ \emph{points} and often, when convenient, identify $\V$ with its set of points.

$q$-designs recently attract considerable attention. They represent a generalisation of classical designs in terms of vector spaces, a so-called $q$-analogue, and were first introduced in the 1970's, see \cite{CI74, CII74, D76}. The recent application of these $q$-designs in error-correction in randomized network coding, made this topic more interesting than ever~\cite{E13,EV11}. 

In random network coding, information is transmitted through a network whose topology can vary. A classical example is a wireless network where users come and go. For more details see \cite{ACLY00, HMK, KK08, L13, YLCZ06}. It was showed in~\cite{KK08} that subspace codes are well-suited for transmission in networks. A {\em subspace code} is a set of $k$-dimensional vector subspaces of the vector space $\F^{v}_q$. This new insight led to many new interesting problems in coding theory, in Galois geometries and in design theory (for a general overview see~\cite{E13}). E.g., it has been noted (see~\cite{ACLY00, EV11}) that $q$-analogues of Steiner systems, briefly called {\em $q$-Steiner systems}, are optimal subspace codes. These $q$-Steiner systems are $q$-designs with $\lambda_t=1$. The existence problem of $q$-Steiner systems is still open. It is known that a $1$-$(v, k, 1)_q$ Steiner system exists if and only if $k$ divides $v$, and in this case it is called a spread. The first $q$-Steiner systems which are not spreads were constructed recently in~\cite{BEOVW15}: using algorithms based on automorphism groups a $2$-$(13,3,1)_2$ design was computationally constructed. Currently, one of the most interesting open problems is the existence of the $q$-analogue of the Fano plane, a $2$-$(7,3,1)_q$ design (see~\cite{BKN15, EV11, KP14, M99, MMY95, ST87}).

Also for general $q$-designs the existence problem is mainly unsolved. Examples of designs over finite fields with $t \ge 2$ constructed so far are mostly $q$-designs with $t=2$ \cite{BEOVW15, BKL07, MMY95, S90, S92, ST87}.

Therefore, an interesting research direction is the study of $q$-designs with $t > 2$. Only a few examples of non-trivial $3$-designs over finite fields were constructed: a $3$-$(8,4,11)_2$ design and a $3$-$(8,4,20)_2$ design in \cite{BKL07}, and a $3$-$(8,4,15)_2$ design in \cite{B13}. They were all constructed computationally with algorithms using their automorphism group. 

One of the open problems posed in~\cite{E13} is finding new necessary conditions on the existence of $q$-Steiner systems. In this paper we give new general necessary conditions on the existence of $3$-$(v, k , \lambda_3)_q$ designs with a prescribed automorphism group. These necessary conditions are based on a tactical decomposition of such a $q$-design and are given in the form of equations for the coefficients of tactical decomposition matrices. In \cite{nak1} tactical decompositions of designs over finite fields with $t = 2$ were studied. It was shown there that coefficients of tactical decomposition matrices satisfy an equation system analogue to the one known for classical block designs (see Section~\ref{sec:2-designs}). The main result is given in  Theorem~\ref{maintheorem}. Further in Section~\ref{sec:3-designs} we show that for $t = 3$, the system of equations for $q$-designs is not equivalent to the one for classical 3-designs. Crucial in the main theorem are the values $\Lambda_{lrs}$; additional results about these $\Lambda$-values are presented in Section~\ref{sec:parameters}. Finally, in Section~\ref{examples} we present some applications of the main theorem to known $q$-designs.

\section{Preliminaries}

The number of $r$-dimensional subspaces of the vector space $\F^{v}_{q}$ is
\[
\gs{v}{r}{q} = \frac{(q^v-1)(q^{v-1}-1)\dots(q^{v-r+1}-1)}{(q^r-1)(q^{r-1}-1)\dots(q-1)}.
\]
The number of $r$-dimensional subspaces of $\V$ containing a fixed $s$-dimensional subspace, $s\le r$, equals $\gs{v-s}{r-s}{q}$. For every two subspaces $U$ and $W$ of a vector space, \emph{the dimension formula} is valid:
\[
\dim\,\langle U, W \rangle = \dim \, U + \dim \, W - \dim\,(U \cap W).
\]

If $(\V, \B)$ is a $t$-$(v,k,\lambda_t)_q$ design, then it is also a $q$-design with parameters $s$-$(v,k,\lambda_s)_q$, $0\le s\le t$, with
\begin{align}\label{tdesignissdesign}
  \lambda_s = \lambda_t \frac{\gs{v-s}{t-s}{q}}{\gs{k-s}{t-s}{q}}.
\end{align}
The number of blocks in $\B$ equals
\[
  |\B|=\lambda_{0}=\lambda_t \frac{\gs{v}{t}{q}}{\gs{k}{t}{q} }.
\]

$q$-designs are closely related to classical designs, as they are the $q$-analogues of the classical designs. A $t$-$(v,k,\lambda_t)$ \emph{design} is a finite incidence structure $ ({\mathcal P}, {\mathcal B} ) $, where $\mathcal{P}$ is a set of $v$ elements called \emph{points}, and $\mathcal{B}$ is a multiset of nonempty $k$-subsets of $\mathcal{P}$ called \emph{blocks} such that every set of $t$ distinct points is contained in precisely $\lambda_t$ blocks. Every $2$-$(v, k, \lambda_2)_q$ design gives rise to a classical design with parameters $2$-$\left(\gs{v}{1}{q},\gs{k}{1}{q},\lambda_2\right)$ by identifying the points of $\V$ with the points of the design and each block in $\B$ with the set of points it contains. The inverse statement is not valid. E.g. there are classical designs with parameters $2$-$(15,7,3)$ which cannot be constructed from the unique $2$-$(4,3,3)_2$ design~\cite{MR07}.

An \emph{automorphism} of the $q$-design $(\V, \B)$ is a map $g\in\PGammaL(\V)$ such that $\B^{g} = \B$. The set $\Aut(\V, \B)$ of all automorphisms of $(\V, \B)$ is a subgroup of $\PGammaL(\V)$, called the \emph{full automorphism group} of $(\V, \B)$. We say that $(\V, \B)$ admits the finite group $G$, or equivalently that $G$ is an automorphism group of $(\V, \B)$, if there is a subgroup of $\Aut(\V, \B)$ isomorphic to $G$.% Note that if $q$ is prime, then $\PGammaL(\V) \simeq \PGL(\V)$. In addition, if $q = 2$, then $\PGammaL(\V) \simeq \GL(\V)$.

\section{$q$-designs with a tactical decomposition}\label{sec:2-designs}

In this section we address the definition and known results concerning automorphism groups and tactical decompositions of $q$-designs. The idea of considering tactical decompositions of classical block designs was first introduced by Dembowski~\cite{D68}. Tactical decomposition has been crucial for the construction of many classical $2$-designs~\cite{JT85, MR07}. In \cite{nak1} tactical decompositions of $q$-designs with $t = 2$ were studied. 

\begin{definition}
Let $(\V, \B)$ be a $q$-design. A \emph{decomposition} of $(\V, \B)$ consists of two partitions
\[
  \V = \V_1 \sqcup \cdots \sqcup \V_m, \quad \B = \B_1 \sqcup \cdots \sqcup \B_n.
\]
We say that a decomposition is \emph{tactical} if there exist nonnegative integers $\rho_{ij}$, $\kappa_{ij}$, $i=1,\dots,m$, $j=1,\dots,n$, such that
  \begin{enumerate}
    \item every point of $\V_i$ is contained in $\rho_{ij}$ blocks of $\B_j$, 
    \item each block of $\B_j$ contains $\kappa_{ij}$ points of $\V_i$.
  \end{enumerate}
The matrices $\R = [\rho_{ij}]$ and $\K = [\kappa_{ij}]$ are called the \emph{tactical decomposition matrices}.
\end{definition}

There are two trivial examples of a tactical decomposition of a $q$-design. The first example is obtained by putting $n = m = 1$, and the second by partitioning sets $\V$ and $\B$ into singletons. A nontrivial tactical decomposition can be obtained by the action of an automorphism group $G \le\Aut(\V, \B)$ on a design.

\begin{theorem}\label{groupinduced}
	Let $G$ be an automorphism group of a design $(\V, \B)$ over a finite field. Then the orbits of the set of points $\V$ and the orbits of the set of blocks $\B$ form a tactical decomposition.
\end{theorem}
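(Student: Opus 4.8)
The plan is to verify directly that the point-orbit and block-orbit partitions satisfy the two defining conditions of a tactical decomposition. Write $\V_1, \dots, \V_m$ for the orbits of $G$ on points and $\B_1, \dots, \B_n$ for the orbits of $G$ on blocks; these are genuine partitions of $\V$ and $\B$ since orbits of a group action partition the set. Fix indices $i$ and $j$. I need to show that the number of blocks of $\B_j$ through a point $P \in \V_i$ is independent of the choice of $P$ in $\V_i$, and dually that the number of points of $\V_i$ on a block $B \in \B_j$ is independent of the choice of $B$ in $\B_j$.

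For the first condition, take two points $P, P' \in \V_i$. Since they lie in the same $G$-orbit, there is $g \in G$ with $P^g = P'$. The map $B \mapsto B^g$ is a bijection from the set $\{B \in \B_j : P \in B\}$ to the set $\{B' \in \B_j : P' \in B'\}$: indeed $g$ permutes $\B$ (as $G \le \Aut(\V, \B)$ and hence $\B^g = \B$) and it fixes the orbit $\B_j$ setwise (orbits are either equal or disjoint, and $B^g$ lies in the same orbit as $B$), while incidence is preserved because $g$ is induced by a semilinear map, so $P \in B$ if and only if $P^g \in B^g$. Hence the two sets have the same cardinality, and we may define $\rho_{ij}$ to be this common value. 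The second condition is entirely symmetric: for $B, B' \in \B_j$ choose $h \in G$ with $B^h = B'$, and observe that $P \mapsto P^h$ is a bijection from $\{P \in \V_i : P \in B\}$ to $\{P \in \V_i : P \in B'\}$, using that $h$ fixes $\V_i$ setwise and preserves incidence; set $\kappa_{ij}$ equal to this common count.

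With $\rho_{ij}$ and $\kappa_{ij}$ so defined, conditions (1) and (2) of the definition of a tactical decomposition hold by construction, and all the values are nonnegative integers, so the decomposition is tactical with matrices $\R = [\rho_{ij}]$ and $\K = [\kappa_{ij}]$. There is no serious obstacle here; the only point that requires a moment's care is the bookkeeping that an element of $G$ maps the block-orbit $\B_j$ (respectively the point-orbit $\V_i$) onto itself — this is immediate because the orbits of a group action are permuted among themselves by the group, in fact each is fixed setwise by the whole group. Everything else is a direct transport-of-structure argument, relying only on the facts that automorphisms act by incidence-preserving bijections on points and blocks and that $G$ stabilises $\B$.
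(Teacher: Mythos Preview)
Your proof is correct and follows essentially the same approach as the paper: both arguments rest on the observation that $P \in B$ if and only if $P^{g} \in B^{g}$, together with the fact that each element of $G$ fixes every orbit setwise. Your version simply spells out in full the bijections that the paper leaves implicit.
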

\begin{proof}
	Let $\V_i$ be a point orbit and $\B_j$ be an orbit of $\B$ under the action of $G$. The statement follows immediately from the observation that $P\in\V_{i}$ is contained in $B\in\B_{j}$ if and only if $P^{g}\in\V_{i}$ is contained in $B^{g}\in\B_{j}$ for any $g\in G$.
\end{proof}

A tactical decomposition that arises from a group action as in Theorem \ref{groupinduced} is called \emph{group-induced}. If the group $G$ is specified, we call it \emph{$G$-induced}.
\par The following result is valid for all designs over finite fields. 

\begin{lemma}[{\cite[Section 2]{nak1}}]\label{firstequations}
	Let $(\V, \B)$ be a design with parameters $t$-$(v,k,\lambda_{t})_q$ that admits a tactical decomposition
	\[
	\V = \V_1 \sqcup \cdots \sqcup \V_m,\quad \B = \B_1 \sqcup \cdots \sqcup \B_n,
	\]
	with tactical decomposition matrices $\R = [\rho_{ij}]$ and $\K = [\kappa_{ij}]$. Then,
	\begin{align*}
	\sum^{m}_{i=1}\kappa_{ij}=\gs{k}{1}{q},\qquad
	\sum^{n}_{j=1}\rho_{ij}=\lambda_{1},
	\end{align*}
	and for all $i=1,\dots,m$ and $j=1,\dots,n$,
	\[
	|\V_{i}|\cdot\rho_{ij}=|\B_{j}|\cdot\kappa_{ij}.
	\]
\end{lemma}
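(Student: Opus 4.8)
The plan is to establish each of the three identities by a double-counting argument over a suitable incidence structure, exploiting the defining regularity properties of the tactical decomposition.

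First I would prove the third (local) identity $|\V_i|\cdot\rho_{ij}=|\B_j|\cdot\kappa_{ij}$. Fix indices $i$ and $j$ and count in two ways the set of incident pairs $\{(P,B): P\in\V_i,\ B\in\B_j,\ P\subset B\}$. Summing over the points $P\in\V_i$ and using property (1) of a tactical decomposition, each such $P$ lies in exactly $\rho_{ij}$ blocks of $\B_j$, so the count is $|\V_i|\cdot\rho_{ij}$. Summing instead over the blocks $B\in\B_j$ and using property (2), each such $B$ contains exactly $\kappa_{ij}$ points of $\V_i$, so the count is $|\B_j|\cdot\kappa_{ij}$. Equating the two gives the identity.

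Next I would prove $\sum_{i=1}^m \kappa_{ij}=\gs{k}{1}{q}$. Fix $j$ and pick any block $B\in\B_j$; since $B$ is a $k$-dimensional subspace it contains exactly $\gs{k}{1}{q}$ points, and these points are partitioned according to which cell $\V_i$ they belong to. By property (2) the number of them in $\V_i$ is $\kappa_{ij}$, so summing over $i$ recovers all $\gs{k}{1}{q}$ points of $B$. For $\sum_{j=1}^n \rho_{ij}=\lambda_1$, fix $i$ and pick any point $P\in\V_i$. Since $(\V,\B)$ is in particular a $1$-$(v,k,\lambda_1)_q$ design by \eqref{tdesignissdesign}, $P$ is contained in exactly $\lambda_1$ blocks of $\B$, and these are partitioned by which cell $\B_j$ contains them; by property (1) the number in $\B_j$ is $\rho_{ij}$, so summing over $j$ gives $\lambda_1$. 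One should note that these counts are independent of the particular choice of $B$ (respectively $P$) precisely because the decomposition is tactical, which is what makes the sums well-defined.

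There is no serious obstacle here; the only point requiring a little care is the implicit remark that each of the three equations is consistent, i.e.\ that the right-hand sides do not depend on which representative $P\in\V_i$ or $B\in\B_j$ is chosen — but this is exactly the content of the tactical hypothesis, so nothing further is needed. The whole proof is three short applications of counting incident point–block pairs (or points in a block, or blocks on a point), and the main work is just bookkeeping the partition. I would present it in essentially the order above, leading with the local identity and then deriving the two row/column sum identities.
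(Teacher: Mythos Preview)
Your proof is correct and is the standard double-counting argument for these identities. Note, however, that the paper does not actually supply a proof of this lemma: it is stated with a citation to \cite[Section 2]{nak1} and no proof environment follows. There is therefore nothing in the present paper to compare your argument against, but what you have written is exactly the elementary argument one would expect (and presumably what appears in the cited reference).
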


It was shown in \cite{nak1} that coefficients of a tactical decomposition matrix of a $q$-design with $t=2$ satisfy an equation system analogous to the one known for classical block designs with $t=2$.

\begin{theorem}\label{secondequations} \cite{nak1} 
	Assume $(\V, \B)$ is a $2$-$(v,k,\lambda_2)_q$ design over finite field or a classical $2$-$(v,k,\lambda_2)$ design with a tactical decomposition 
	$$
	\V = \V_1 \sqcup \cdots \sqcup \V_m,\,\,\,
	{\mathcal B} = {\mathcal B}_1 \sqcup \cdots \sqcup {\mathcal B}_n.
	$$
	Let $[\rho_{ij}]$ and $[\kappa_{ij}]$ be the associated tactical decomposition matrices.
	Then
	\begin{align}\label{eq2}
	\sum_{j = 1}^{n} \rho_{l j} \kappa_{r j} =
	\begin{cases}
	  \lambda_1 + \lambda_2 \cdot (|\V_r|-1) & l = r\\
	  \lambda_2 \cdot |\V_r| & l \neq r.
	\end{cases}
	\end{align}
\end{theorem}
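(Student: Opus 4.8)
The plan is to prove the identity by a double count of incidences hanging off a single fixed point. Fix a point $P\in\V_l$ and consider the set $\mathcal{F}$ of pairs $(Q,B)$ with $Q\in\V_r$, $B\in\B$, and $P,Q$ both contained in $B$. I will compute $|\mathcal{F}|$ in two ways and equate the results.

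First I would count $\mathcal{F}$ by grouping the blocks according to the block-partition $\B=\B_1\sqcup\cdots\sqcup\B_n$. If $B\in\B_j$ and $P\in B$, then, since $P\in\V_l$, there are exactly $\rho_{lj}$ such blocks; and each of them --- by the defining property of the tactical decomposition, which applies to \emph{every} block of $\B_j$ --- contains exactly $\kappa_{rj}$ points of $\V_r$, hence contributes $\kappa_{rj}$ pairs to $\mathcal{F}$. Summing over $j$ gives $|\mathcal{F}|=\sum_{j=1}^{n}\rho_{lj}\kappa_{rj}$, which is the left-hand side of \eqref{eq2}.

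Next I would count $\mathcal{F}$ by grouping according to the point $Q\in\V_r$. If $Q\neq P$ (as $1$-spaces), a block $B$ contains both $P$ and $Q$ if and only if it contains the $2$-dimensional space $\langle P,Q\rangle$, and by the $2$-design property there are precisely $\lambda_2$ such blocks. If $Q=P$ --- which can occur only when $l=r$ --- the relevant count is the number of blocks through the point $P$, namely $\lambda_1$ (using that a $2$-$(v,k,\lambda_2)_q$ design is also a $1$-$(v,k,\lambda_1)_q$ design). Hence $|\mathcal{F}|=\lambda_2\cdot|\V_r|$ when $l\neq r$, and $|\mathcal{F}|=\lambda_1+\lambda_2\cdot(|\V_r|-1)$ when $l=r$. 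Equating with the first count yields \eqref{eq2}. The classical case is identical, with the $2$-space $\langle P,Q\rangle$ replaced by the $2$-subset $\{P,Q\}$.

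I do not expect a genuine obstacle: the only step needing care is verifying that the product $\rho_{lj}\kappa_{rj}$ really enumerates pairs, which relies on the tactical property holding uniformly over all blocks of $\B_j$ (so that restricting to blocks through $P$ does not alter the $\V_r$-count per block), and on $P\in\V_l$ forcing the block-count through $P$ in $\B_j$ to be exactly $\rho_{lj}$ rather than something $P$-dependent. It is also worth remarking that this argument never distinguishes the subspace lattice from the subset lattice --- both $\langle P,Q\rangle$ and $\{P,Q\}$ are "size-$2$" objects each lying in a constant number $\lambda_2$ of blocks --- which is precisely the reason the $t=2$ system agrees with the classical one, in contrast to the $t=3$ phenomenon discussed later in the paper.
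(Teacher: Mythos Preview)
Your proof is correct. The paper does not actually prove this theorem---it is quoted from \cite{nak1} without argument---but your double count of pairs $(Q,B)$ with $P,Q\le B$ is exactly the method the paper itself employs in Lemma~\ref{dclemma} for the $t=3$ analogue (there with triples $(R,S,B)$), so your approach is entirely in line with the paper's techniques.
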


%The system of equations for coefficients of tactical decomposition matrices represents necessary conditions for the existence of designs over finite fields with a given tactical decomposition. If a tactical decomposition is group-induced, these systems of equations represent necessary conditions on the existence of designs admitting a prescribed automorphism group. 
Note that the right-hand side of (\ref{eq2}) only contains parameters of the design that can easily be computed.% These equations were generalized for classical designs for any $t \ge 1$ in~\cite{KNP13}. It is our intention to find an analogous system of equations for $3$-$(v, k, \lambda_3)_q$ designs over finite fields.

\section{$3$-designs over finite fields with nontrivial automorphism groups}\label{sec:3-designs}

We now investigate designs over finite fields with $t=3$ having a nontrivial tactical decomposition. %We focus our attention on group-induced tactical decomposition of a design. %The main result is given in  Theorem~\ref{maintheorem}. It gives necessary conditions on the existence of 3-designs over finite fields admitting a prescribed automorphism group. This system of equations for $3$-designs over finite fields is not equivalent to the one of classical designs presented in \cite{KNP11, KNP13}.
We introduce the following notation. 

\begin{definition}\label{bigM}
  Let $\V = \V_1 \sqcup \cdots \sqcup \V_m$ be a partition of the vector space $\V$. For a point $P \in \V_l$, we define the parameters 
  \[
    \Lambda_{rs}(P) = \left|\{(R, S) \in \V_{r}\times\V_s\, : \, \dim\langle P, R, S \rangle = 2\}\right|.%(P, R, S) : 
  \]
\end{definition}

\begin{lemma}\label{dclemma}
   Let $(\V, \B)$ be a $3$-$(v,k,\lambda_{3})_q$ design that admits a tactical decomposition
   \[
     \V = \V_1 \sqcup \cdots \sqcup \V_m,\quad \B = \B_1 \sqcup \cdots \sqcup \B_n,
   \]
   with tactical decomposition matrices $\R = [\rho_{ij}]$ and $\K = [\kappa_{ij}]$. Let $P$ be a point in $\V_{l}$.  Then,
   \[
     \sum_{j=1}^{n} \rho_{lj}\kappa_{rj}\kappa_{sj} =
     \begin{cases}
       \lambda_{1}+\Lambda_{ll}(P)\cdot\lambda_{2}+\left(|\V_{l}|^{2}-\Lambda_{ll}(P)-1\right)\cdot\lambda_{3} & l=r=s\\
       \Lambda_{rs}(P)\cdot\lambda_{2}+\left(|\V_{r}|\cdot|\V_{s}|- \Lambda_{rs}(P)\right)\cdot\lambda_{3} & \text{otherwise.}
     \end{cases}
   \]
\end{lemma}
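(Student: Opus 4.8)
The plan is to evaluate $N := \sum_{j=1}^{n}\rho_{lj}\kappa_{rj}\kappa_{sj}$ by double counting the set
\[
  \mathcal{T} = \{(B,R,S)\,:\,B\in\B,\ P\subseteq B,\ R\in\V_r,\ R\subseteq B,\ S\in\V_s,\ S\subseteq B\},
\]
where $P$, $R$, $S$ are regarded as $1$-spaces. Counting $\mathcal{T}$ block class by block class, I would note that for a fixed $j$, since $P\in\V_l$ there are exactly $\rho_{lj}$ blocks of $\B_j$ through $P$; each such block contains $\kappa_{rj}$ points of $\V_r$ and $\kappa_{sj}$ points of $\V_s$, so $\B_j$ contributes $\rho_{lj}\kappa_{rj}\kappa_{sj}$ triples. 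Summing over $j$ gives $|\mathcal{T}| = N$.

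Next I would recount $\mathcal{T}$ by first choosing the pair $(R,S)\in\V_r\times\V_s$ and then counting the blocks $B$ with $\langle P,R,S\rangle\subseteq B$. Since $(\V,\B)$ is a $3$-$(v,k,\lambda_3)_q$ design it is also an $s$-$(v,k,\lambda_s)_q$ design for $s=1,2$ by \eqref{tdesignissdesign}, so this number is $\lambda_d$ with $d=\dim\langle P,R,S\rangle$, and $d\in\{1,2,3\}$ because $\langle P,R,S\rangle$ is spanned by three $1$-spaces (and $k\geq 3$, so such blocks exist). Hence
\[
  N = \lambda_1 A_1 + \lambda_2 A_2 + \lambda_3 A_3,\qquad A_d := \bigl|\{(R,S)\in\V_r\times\V_s : \dim\langle P,R,S\rangle = d\}\bigr|.
\]

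It then remains to identify $A_1,A_2,A_3$. By Definition~\ref{bigM} we have $A_2=\Lambda_{rs}(P)$ directly. For $A_1$: the condition $\dim\langle P,R,S\rangle=1$ forces $R=S=P$ as $1$-spaces, which is possible precisely when $P\in\V_r\cap\V_s$; since the $\V_i$ partition $\V$ and $P\in\V_l$, this occurs iff $l=r=s$, in which case $A_1=1$, and $A_1=0$ otherwise. Finally $A_1+A_2+A_3=|\V_r|\cdot|\V_s|$ yields $A_3=|\V_r|\cdot|\V_s|-\Lambda_{rs}(P)-A_1$. Substituting into the displayed expression for $N$ and splitting into the cases $l=r=s$ and "otherwise" gives exactly the two formulas in the statement.

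The argument is essentially organized double counting, so there is no serious obstacle. The one place to stay careful is the case $l=r=s$: one must correctly peel off the contribution of the degenerate pair $(P,P)$, which supplies the $\lambda_1$ term and must be excluded both from the $\Lambda_{ll}(P)$ count and from the $|\V_l|^2$ count of dimension-$3$ pairs. The partition hypothesis on the $\V_i$ is precisely what forces $A_1$ to vanish in all remaining cases, making the "otherwise" branch clean.
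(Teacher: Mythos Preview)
Your proof is correct and follows essentially the same double-counting argument as the paper: both count triples $(R,S,B)\in\V_r\times\V_s\times\B$ with $P,R,S\le B$, first by block class (yielding $\sum_j\rho_{lj}\kappa_{rj}\kappa_{sj}$) and then by the dimension of $\langle P,R,S\rangle$ (yielding the $\lambda_1,\lambda_2,\lambda_3$ contributions). Your explicit labeling of $A_1,A_2,A_3$ and the observation $A_1+A_2+A_3=|\V_r|\cdot|\V_s|$ is slightly more systematic than the paper's phrasing, but the substance is identical.
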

\begin{proof}
  Double-counting of the set of triples%quadruples 
  \[
    \{(R,S,B) \in \V_{r} \times \V_{s} \times \B\, : \,  P, R, S \le B \}% (P, R,S,B) \, : \, 
  \]
  yields
  \begin{align}\label{doublecounting}
    \sum_{j=1}^{n} \rho_{lj}\kappa_{rj}\kappa_{sj} = \sum_{R \in \V_r} \sum_{S \in \V_{s}} | \I_{P} \cap \I_{R} \cap \I_{S}|,
  \end{align}
  with $\I_{Q}$ the subset of blocks of $\B$ that contain the point $Q$, for any point $Q$. Now, consider $R \in \V_r$ and $S \in \V_{s}$. It is immediate that
  \[
    \I_{P} \cap \I_{R} \cap \I_{S} = \{  B \in \B \, : \, \left\langle P, R, S \right\rangle\le B \}.
  \]
  It is clear that $1\leq\dim\left\langle P,R,S \right\rangle\leq3$ and so
  \[
    \left|\I_{P} \cap \I_{R} \cap \I_{S}\right|=\lambda_{\dim\left\langle P,R,S \right\rangle}.
  \]
  Hence, in order to find an expression for \eqref{doublecounting} it is sufficient to count the number of pairs $(R,S)\in\V_{r}\times\V_{s}$ such that $\dim\left\langle P,R,S \right\rangle=i$, for $i=1,2,3$. It is clear that $\dim\left\langle P,R,S \right\rangle=1$ if and only if $P=R=S$ and thus $l=r=s$. Consequently,
  \begin{align*}
    &\sum_{R \in \V_r} \sum_{S \in \V_{s}} | \I_{P} \cap \I_{R} \cap \I_{S}|\\&=
    \begin{cases}
      \lambda_{1}+\Lambda_{ll}(P)\cdot\lambda_{2}+\left(|\V_{l}|^{2}-\Lambda_{ll}(P)-1\right)\cdot\lambda_{3}, & l=r=s\\
      \Lambda_{rs}(P)\cdot\lambda_{2}+\left(|\V_{r}|\cdot|\V_{s}|-\Lambda_{rs}(P)\right)\cdot\lambda_{3}, & \text{otherwise,}
    \end{cases}
  \end{align*}
  from which the result follows, using \eqref{doublecounting}.
\end{proof}

\begin{corollary}\label{independent}
  Let $(\V, \B)$ be a $3$-$(v,k,\lambda_{3})_q$ design that admits a tactical decomposition
  \[
    \V = \V_1 \sqcup \cdots \sqcup \V_m,\quad \B = \B_1 \sqcup \cdots \sqcup \B_n.
  \]
  Consider $l,r,s\in\{1,\dots,m\}$. The values $\Lambda_{rs}(P)$ are independent of the choice of $P\in\V_{l}$.
\end{corollary}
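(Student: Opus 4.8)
The plan is to read the statement off directly from Lemma~\ref{dclemma}. Fix $l,r,s\in\{1,\dots,m\}$ and an arbitrary point $P\in\V_{l}$. In both cases of Lemma~\ref{dclemma} the left-hand side $\sum_{j=1}^{n}\rho_{lj}\kappa_{rj}\kappa_{sj}$ depends only on the indices $l,r,s$ and on the tactical decomposition matrices $\R$ and $\K$; in particular it does not depend on which point $P\in\V_{l}$ was chosen. On the right-hand side the only quantity that involves $P$ is $\Lambda_{rs}(P)$ (respectively $\Lambda_{ll}(P)$ when $l=r=s$), all other terms being fixed design parameters. Isolating this term gives, when $l\neq r$ or $r\neq s$,
\[
  \Lambda_{rs}(P)\cdot(\lambda_{2}-\lambda_{3}) = \sum_{j=1}^{n}\rho_{lj}\kappa_{rj}\kappa_{sj} - |\V_{r}|\cdot|\V_{s}|\cdot\lambda_{3},
\]
and, when $l=r=s$,
\[
  \Lambda_{ll}(P)\cdot(\lambda_{2}-\lambda_{3}) = \sum_{j=1}^{n}\rho_{lj}\kappa_{lj}^{2} - \lambda_{1} - \left(|\V_{l}|^{2}-1\right)\cdot\lambda_{3}.
\]
Hence it only remains to check that the coefficient $\lambda_{2}-\lambda_{3}$ is nonzero; once that is done, $\Lambda_{rs}(P)$ equals a fixed expression in the design parameters and the entries of $\R$ and $\K$, which is manifestly independent of $P$.

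For the remaining step I would appeal to~\eqref{tdesignissdesign} with $t=3$ and $s=2$, which yields $\lambda_{2}=\lambda_{3}\cdot\gs{v-2}{1}{q}/\gs{k-2}{1}{q}=\lambda_{3}\cdot\frac{q^{v-2}-1}{q^{k-2}-1}$. Since $k\geq t=3$ we have $k-2\geq 1$, and since $v>k$ we get $q^{v-2}-1>q^{k-2}-1>0$; as $\lambda_{3}\geq 1$ this forces $\lambda_{2}>\lambda_{3}$, so in particular $\lambda_{2}-\lambda_{3}\neq 0$. Dividing the displayed identities by $\lambda_{2}-\lambda_{3}$ then finishes the proof, and in fact exhibits closed formulas for the $\Lambda$-values, although these are not needed for the corollary as stated.

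There is no real obstacle here: the whole content is already contained in the double count carried out in Lemma~\ref{dclemma}, and the only additional ingredient is the elementary observation $\lambda_{2}\neq\lambda_{3}$, which comes from the standard divisibility identity~\eqref{tdesignissdesign}. The same remark actually gives slightly more than asked, namely that for \emph{any} tactical decomposition the quantities $\Lambda_{rs}$ are determined by $\R$, $\K$ and the design parameters, not merely constant on each part $\V_{l}$.
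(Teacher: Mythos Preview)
Your proof is correct and follows essentially the same approach as the paper: both invoke Lemma~\ref{dclemma}, observe that the left-hand side is independent of the choice of $P\in\V_{l}$, and then divide by $\lambda_{2}-\lambda_{3}\neq 0$ to conclude. Your argument is slightly more detailed in that you explicitly justify $\lambda_{2}>\lambda_{3}$ via~\eqref{tdesignissdesign}, whereas the paper simply asserts $\lambda_{2}-\lambda_{3}\neq 0$.
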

\begin{proof}
  Let $\R = [\rho_{ij}]$ and $\K = [\kappa_{ij}]$ be the tactical decomposition matrices of this tactical decomposition. By Lemma \ref{dclemma} we know that
  \begin{align*}
    \sum_{j=1}^{n} \rho_{lj}\kappa_{rj}\kappa_{sj} &=
    \begin{cases}
      \Lambda_{ll}(P)\cdot(\lambda_{2}-\lambda_{3})+\lambda_{1}+\left(|\V_{l}|^{2}-1\right)\cdot\lambda_{3}, & l=r=s\\
      \Lambda_{rs}(P)\cdot(\lambda_{2}-\lambda_{3})+|\V_{r}|\cdot|\V_{s}|\cdot\lambda_{3}, & \text{otherwise.}
    \end{cases}
  \end{align*}
  for a point $P\in \V_{l}$. The left-hand side in this equation is clearly independent of the choice of $P$, hence also the right-hand side is independent of the choice of $P$. As $\lambda_{1},\lambda_{2},\lambda_{3},|\V_{r}|,|\V_{s}|$ are obviously $P$-independent and since $\lambda_{2}-\lambda_{3}\neq0$, necessarily also $\Lambda_{rs}(P)$ is independent of the choice of $P\in\V_{l}$.% Consequently, for every $i$, the values $\Lambda_{lrs}^i(P)$ are independent of the choice of $P$.
\end{proof}

%We make a small remark regarding the result of this corollary. For tactical decompositions that arise from the orbits of an automorphism group (see Theorem \ref{groupinduced}) this result is immediate. Based on Lemma \ref{dclemma} we now know that this is true for all tactical decompositions. 
Following the result of Corollary \ref{independent}, we can define $\Lambda_{lrs}$ as $\Lambda_{rs}(P)$ for a point $P\in\V_{l}$. Using this notation, we state the main theorem of this section. %It is expressed in terms of tactical decompositions, but more important, this theorem presents necessary conditions for the existence of $3$-designs over finite fields with a prescribed automorphism group. It summarizes results from Lemma~\ref{firstequations}, Theorem~\ref{secondequations} and Lemma~\ref{dclemma}.

%First note that a $3$-design is also a 2-design. Hence, Lemma \ref{firstequations} and Lemma \ref{secondequations} present necessary conditions for the existence of $3$-designs over finite fields with a prescribed tactical decomposition. In order to find more necessary conditions

\begin{theorem}\label{maintheorem}
	Let $(\V, \B)$ be a design over finite field with parameters $3$-$(v,k,\lambda_{3})_q$ that admits a tactical decomposition
	\[
	\V = \V_1 \sqcup \cdots \sqcup \V_m,\quad \B = \B_1 \sqcup \cdots \sqcup \B_n,
	\]
	with tactical decomposition matrices $\R = [\rho_{ij}]$ and $\K = [\kappa_{ij}]$. Then,
	\begin{align*}
	%&\sum^{m}_{i=1}\kappa_{ij}=\gs{k}{1}{q},\qquad \sum^{n}_{j=1}\rho_{ij}=\lambda_{1},\\
	%&\sum_{j = 1}^{n} \rho_{l j} \kappa_{r j} = 
	%\begin{cases}
	  %\lambda_1 + \lambda_2 \cdot (|\V_r|-1) & l = r \\
	  %\lambda_2 \cdot |\V_r| & l \neq r,
	%\end{cases}\\
	&\sum_{j=1}^{n} \rho_{lj}\kappa_{rj}\kappa_{sj} =
	\begin{cases}
	  \lambda_{1}+\Lambda_{lll}\cdot\lambda_{2}+\left(|\V_{l}|^{2}-\Lambda_{lll}-1\right)\cdot\lambda_{3} & l=r=s\\
	  \Lambda_{lrs}\cdot\lambda_{2}+\left(|\V_{r}|\cdot|\V_{s}|-\Lambda_{lrs}\right)\cdot\lambda_{3} & \text{otherwise}. \\
	\end{cases}
	\end{align*}
\end{theorem}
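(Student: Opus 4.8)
The plan is to observe that Theorem~\ref{maintheorem} is an immediate consequence of Lemma~\ref{dclemma} together with Corollary~\ref{independent}, so that essentially only a change of notation remains to be carried out. First I would fix an arbitrary point $P\in\V_{l}$; since the classes $\V_{i}$ of the tactical decomposition are nonempty, such a $P$ exists. Applying Lemma~\ref{dclemma} to this $P$ gives
\[
  \sum_{j=1}^{n} \rho_{lj}\kappa_{rj}\kappa_{sj} =
  \begin{cases}
    \lambda_{1}+\Lambda_{ll}(P)\cdot\lambda_{2}+\left(|\V_{l}|^{2}-\Lambda_{ll}(P)-1\right)\cdot\lambda_{3} & l=r=s\\
    \Lambda_{rs}(P)\cdot\lambda_{2}+\left(|\V_{r}|\cdot|\V_{s}|-\Lambda_{rs}(P)\right)\cdot\lambda_{3} & \text{otherwise.}
  \end{cases}
\]

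Next I would invoke Corollary~\ref{independent}, which guarantees that $\Lambda_{rs}(P)$ does not depend on the particular choice of $P$ inside $\V_{l}$; this is precisely what legitimises the definition $\Lambda_{lrs}:=\Lambda_{rs}(P)$ for $P\in\V_{l}$ made just before the statement. Substituting $\Lambda_{ll}(P)=\Lambda_{lll}$ in the first case and $\Lambda_{rs}(P)=\Lambda_{lrs}$ in the second case turns the display above into exactly the two formulas appearing in the theorem, which completes the argument.

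Since Lemma~\ref{dclemma} already performs the genuine combinatorial work — the double count of the triples $(R,S,B)$ with $P,R,S\le B$, partitioned according to whether $\dim\langle P,R,S\rangle$ equals $1$, $2$ or $3$ — and Corollary~\ref{independent} supplies the well-definedness (using $\lambda_{2}\neq\lambda_{3}$, which holds because $v>k$), there is no substantial obstacle left at this stage; the theorem is really a clean repackaging of the lemma and its corollary under the uniform notation $\Lambda_{lrs}$. The only thing I would double-check is that the bookkeeping in the diagonal case $l=r=s$ is consistent, namely that the single pair $(R,S)=(P,P)$ is the unique one with $\dim\langle P,R,S\rangle=1$ and is therefore correctly subtracted out of the $\lambda_{3}$-weighted term (and not among the $\Lambda_{lll}$ pairs counted with weight $\lambda_{2}$) — but this is already handled inside the proof of Lemma~\ref{dclemma}, so nothing new is needed here.
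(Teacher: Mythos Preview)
Your proposal is correct and matches the paper's approach exactly: the paper gives no separate proof of Theorem~\ref{maintheorem}, since it is simply Lemma~\ref{dclemma} rewritten using the notation $\Lambda_{lrs}:=\Lambda_{rs}(P)$, whose well-definedness is justified by Corollary~\ref{independent}. Your write-up makes this explicit and adds nothing that isn't already implicit in the paper's presentation.
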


Note that a $q$-design with $t=3$ is also a $q$-design with $t=2$. Hence, Lemma \ref{firstequations} and Lemma \ref{secondequations} also present necessary conditions for the existence of $q$-designs with $t=3$, with a given tactical decomposition.

These parameters $\Lambda_{lrs}$ are not present in the discussion of classical $3$-designs as in \cite{KNP11, KNP13}, and form the main difference between the designs over finite fields and the classical designs with $t=3$. %In the case of classical designs we are able to express analogously defined parameters $\Lambda_{lrs}$ in terms of known parameters of design. For completeness, 
In order to compare we state here the analogue of Theorem \ref{maintheorem} for classical designs with $t = 3$.

\begin{theorem}\cite{KNP11, KNP13}
Let $(\V, \B)$ be a $3$-$(v,k,\lambda_{3})$ design that admits a tactical decomposition
\[
  \V = \V_1 \sqcup \cdots \sqcup \V_m,\quad \B = \B_1 \sqcup \cdots \sqcup \B_n,
\]
with tactical decomposition matrices $\R = [\rho_{ij}]$ and $\K = [\kappa_{ij}]$. Then,
	\begin{align*}
	%&\sum^{m}_{i=1}\kappa_{ij}=k,\qquad
	%\sum^{n}_{j=1}\rho_{ij}=\lambda_{1},\\
	%&\sum_{j = 1}^{n} \rho_{l j} \kappa_{r j} = 
	%\begin{cases}
	  %\lambda_1 + \lambda_2 \cdot (|\V_r|-1) & l = r \\
	  %\lambda_2 \cdot |\V_r|, & l \neq r
	%\end{cases}\\
	&\sum_{j=1}^{n} \rho_{l j} \kappa_{r j} \kappa_{s j}\\ 
	&=\begin{cases}
	    \lambda_1 + 3\,(|\V_l|-1)\cdot \lambda_2 + (|\V_l|-1)\cdot(|\V_l|-2)\cdot \lambda_3 & l = r = s\\
	    \lambda_{2}\cdot|\V_{r}|+\lambda_3\cdot|\V_{r}|\cdot(|\V_{r}|-1) & l\neq r=s\\
	    |\V_r|\cdot |\V_s|\cdot \lambda_3+(\lambda_2-\lambda_3)\cdot(\delta_{lr}\cdot|\V_s| + \delta_{ls}\cdot|\V_r|)  & \mbox{otherwise}.
	  \end{cases} 
	\end{align*}
\end{theorem}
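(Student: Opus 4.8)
The plan is to prove this by double counting, in direct analogy with the proof of Lemma~\ref{dclemma}, simply replacing ``dimension of a span'' by ``cardinality of a set''. Fix a point $P\in\V_l$ and count the triples in
\[
  T = \{(R,S,B)\in\V_r\times\V_s\times\B \,:\, P,R,S\in B\}
\]
in two ways. Counting block class by block class exactly as in the proof of Lemma~\ref{dclemma}: each block $B$ of $\B_j$ that contains $P$ meets $\V_r$ in $\kappa_{rj}$ points and $\V_s$ in $\kappa_{sj}$ points, and there are $\rho_{lj}$ blocks of $\B_j$ through $P$, so $\B_j$ contributes $\rho_{lj}\kappa_{rj}\kappa_{sj}$; summing over $j$ gives $|T|=\sum_{j=1}^n\rho_{lj}\kappa_{rj}\kappa_{sj}$, which is precisely the left-hand side of the asserted identity.

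On the other hand, fixing $(R,S)\in\V_r\times\V_s$, the set $\{B\in\B : P,R,S\in B\}$ has size $\lambda_{|\{P,R,S\}|}$, since a $3$-$(v,k,\lambda_3)$ design is simultaneously a $1$-, $2$- and $3$-design. Hence $|T|=\lambda_1N_1+\lambda_2N_2+\lambda_3N_3$, where $N_i$ is the number of pairs $(R,S)\in\V_r\times\V_s$ with $|\{P,R,S\}|=i$. Thus the whole proof reduces to evaluating $N_1,N_2,N_3$; note that, in contrast to the numbers $\Lambda_{lrs}$ appearing over $\F_q$, these depend only on the indices $l,r,s$ and the orbit sizes, which is exactly why no analogue of Corollary~\ref{independent} is needed. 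One has $N_1=1$ if $l=r=s$ and $N_1=0$ otherwise. For $N_2$, I would split into the three disjoint sub-cases $R=P\neq S$, $S=P\neq R$ and $R=S\neq P$, obtaining
\[
  N_2 = \delta_{lr}(|\V_s|-\delta_{ls}) + \delta_{ls}(|\V_r|-\delta_{lr}) + \delta_{rs}(|\V_r|-\delta_{lr}),
\]
and then $N_3 = |\V_r|\cdot|\V_s| - N_1 - N_2$.

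It remains to substitute into $|T|=\lambda_1N_1+\lambda_2N_2+\lambda_3N_3$ and specialise to the three cases. If $l=r=s$ then $N_1=1$, $N_2=3(|\V_l|-1)$ and $N_3=(|\V_l|-1)(|\V_l|-2)$; if $l\neq r=s$ then $N_1=0$, $N_2=|\V_r|$ and $N_3=|\V_r|(|\V_r|-1)$; both give the stated formulas at once. The ``otherwise'' case is the only one requiring a little care: there $r\neq s$ (because $r=s$ would force one of the first two cases), so $\delta_{rs}=0$, and $\delta_{lr}$ and $\delta_{ls}$ cannot both equal $1$, so $\delta_{lr}\delta_{ls}=0$; consequently $N_2=\delta_{lr}|\V_s|+\delta_{ls}|\V_r|$ and $N_3=|\V_r|\cdot|\V_s|-\delta_{lr}|\V_s|-\delta_{ls}|\V_r|$, whence $|T|=|\V_r|\cdot|\V_s|\cdot\lambda_3+(\lambda_2-\lambda_3)(\delta_{lr}|\V_s|+\delta_{ls}|\V_r|)$, as claimed. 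I do not expect any genuine obstacle; the argument is pure bookkeeping, the only delicate point being the mutual exclusivity of the Kronecker deltas that collapses $N_2$ in the last case.
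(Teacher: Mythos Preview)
Your proof is correct. The paper itself does not prove this theorem; it is quoted from \cite{KNP11,KNP13} purely for comparison with the $q$-analogue in Theorem~\ref{maintheorem}, so there is no ``paper's own proof'' to compare against. That said, your argument is exactly the classical-design specialisation of the paper's proof of Lemma~\ref{dclemma}: the same double count of $\{(R,S,B):P,R,S\in B\}$, with $|\{P,R,S\}|$ playing the role of $\dim\langle P,R,S\rangle$. Your observation that $N_1,N_2,N_3$ are determined by the indices $l,r,s$ and the orbit sizes alone---whereas in the $q$-setting the analogous quantity $\Lambda_{lrs}$ genuinely depends on the geometry of the partition---is precisely the point the paper is making by juxtaposing the two theorems. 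The case analysis and the handling of the Kronecker deltas in the ``otherwise'' branch (noting that $r\neq s$ is forced, whence $\delta_{rs}=0$ and $\delta_{lr}\delta_{ls}=0$) are clean and complete.
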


\section{Some results regarding $\Lambda_{lrs}$}\label{sec:parameters}

In this section we have a look at the parameter $\Lambda_{lrs}$ which we introduced in Definition \ref{bigM}. In order to use Theorem \ref{maintheorem} when computationally constructing a $q$-design, one needs to know these values $\Lambda_{lrs}$ for the given tactical decomposition. First we present three general results on these values $\Lambda_{lrs}$ and then we look at a specific case.

\begin{theorem}
  Let $(\V, \B)$ be a $3$-$(v,k,\lambda_{3})_q$ design that admits a tactical decomposition
   \[
     \V = \V_1 \sqcup \cdots \sqcup \V_m,\quad \B = \B_1 \sqcup \cdots \sqcup \B_n.
   \]
   Then $\Lambda_{lrs}=\Lambda_{lsr}$ and $|\V_{l}|\cdot\Lambda_{lrs}=|\V_{r}|\cdot\Lambda_{rls}$.
\end{theorem}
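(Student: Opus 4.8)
The statement has two parts, and the first is essentially a triviality. The symmetry $\Lambda_{lrs} = \Lambda_{lsr}$ follows directly from the definition in Definition~\ref{bigM}: the condition $\dim\langle P, R, S\rangle = 2$ is symmetric in $R$ and $S$, so the map $(R,S) \mapsto (S,R)$ is a bijection between the set counted by $\Lambda_{rs}(P)$ and the set counted by $\Lambda_{sr}(P)$. Invoking Corollary~\ref{independent} to pass from $\Lambda_{rs}(P)$ to the $P$-independent quantity $\Lambda_{lrs}$ finishes this half.

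For the second identity, the plan is a double-counting argument on the set
\[
  T = \{(P, R, S) \in \V_l \times \V_r \times \V_s \,:\, \dim\langle P, R, S\rangle = 2\}.
\]
Counting $T$ by first choosing $P \in \V_l$ and then the pair $(R,S)$: for each fixed $P$ there are, by Corollary~\ref{independent} and the definition of $\Lambda_{lrs}$, exactly $\Lambda_{lrs}$ admissible pairs, so $|T| = |\V_l| \cdot \Lambda_{lrs}$. Counting $T$ the other way by first choosing $R \in \V_r$ and then the pair $(P,S) \in \V_l \times \V_s$ with $\dim\langle R, P, S\rangle = 2$: for each fixed $R$ the number of such pairs is $\Lambda_{rls}$ (this is $\Lambda_{ls}(R)$ in the notation of Definition~\ref{bigM}, which by Corollary~\ref{independent} equals the constant $\Lambda_{rls}$), so $|T| = |\V_r| \cdot \Lambda_{rls}$. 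Equating the two counts gives $|\V_l| \cdot \Lambda_{lrs} = |\V_r| \cdot \Lambda_{rls}$.

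The only point requiring care — and the nearest thing to an obstacle — is the bookkeeping of the index convention: the first subscript of $\Lambda$ always records the part containing the \emph{distinguished} point, while the remaining two subscripts are the (unordered) parts of the pair. One must check that the condition $\dim\langle P, R, S\rangle = 2$ appearing in all three counts is literally the same condition, merely with a different element singled out as the distinguished point; since $\langle P, R, S\rangle$ does not depend on which of the three is called ``$P$'', this is immediate, and no geometric input beyond Definition~\ref{bigM} and Corollary~\ref{independent} is needed.
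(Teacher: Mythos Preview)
Your proof is correct and takes a genuinely different, more direct route than the paper's. For the second identity the paper argues algebraically through the tactical decomposition matrices: using $|\V_i|\,\rho_{ij}=|\B_j|\,\kappa_{ij}$ from Lemma~\ref{firstequations} it rewrites $|\V_l|\sum_j\rho_{lj}\kappa_{rj}\kappa_{sj}=|\V_r|\sum_j\rho_{rj}\kappa_{lj}\kappa_{sj}$, then applies Theorem~\ref{maintheorem} to each side and cancels the factor $\lambda_2-\lambda_3\neq 0$. Your double count of the triples $(P,R,S)\in\V_l\times\V_r\times\V_s$ with $\dim\langle P,R,S\rangle=2$ bypasses the $\rho,\kappa$ machinery entirely; the only design-specific input you use is Corollary~\ref{independent}, which guarantees that $\Lambda_{rs}(P)$ is constant over $P\in\V_l$ so that the count factors as $|\V_l|\cdot\Lambda_{lrs}$. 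Both arguments ultimately rest on that corollary (and hence on the $3$-design structure), but yours isolates the combinatorial symmetry more transparently, while the paper's version shows that the identity is also a formal consequence of the equations in Theorem~\ref{maintheorem}.
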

\begin{proof}
  It follows directly from Definition \ref{bigM} that $\Lambda_{rs}(P)=\Lambda_{sr}(P)$ for any point $P\in\V_{l}$, and hence that $\Lambda_{lrs}=\Lambda_{lsr}$.
  \par The equality $|\V_{l}|\Lambda_{lrs}=|\V_{r}|\Lambda_{rls}$ surely holds if $l=r$, so we can assume $l\neq r$. Let $\R = [\rho_{ij}]$ and $\K = [\kappa_{ij}]$ be the tactical decompostion matrices of the given tactical decompostion. Using Lemma \ref{firstequations} we find that
  \[
    \sum_{j=1}^{n} \rho_{lj}\kappa_{rj}\kappa_{sj}= \sum_{j=1}^{n} \frac{|\B_{j}|}{|\V_{l}|}\kappa_{lj}\kappa_{rj}\kappa_{sj}= \sum_{j=1}^{n} \frac{|\V_{r}|}{|\V_{l}|}\rho_{rj}\kappa_{lj}\kappa_{sj},
  \]
  hence
  \[
    |\V_{l}|\sum_{j=1}^{n} \rho_{lj}\kappa_{rj}\kappa_{sj}= |\V_{r}|\sum_{j=1}^{n}\rho_{rj}\kappa_{lj}\kappa_{sj}.
  \]
  Applying Theorem \ref{maintheorem} we find
  \begin{align*}
    &|\V_{l}|\left(\Lambda_{lrs}(\lambda_{2}-\lambda_{3})+\lambda_{3}|\V_{r}|\cdot|\V_{s}|\right)=|\V_{r}|\left(\Lambda_{rls}(\lambda_{2}-\lambda_{3})+\lambda_{3}|\V_{l}|\cdot|\V_{s}|\right)\\
    &\Leftrightarrow\quad|\V_{l}|\Lambda_{lrs}(\lambda_{2}-\lambda_{3})=|\V_{r}|\Lambda_{rls}(\lambda_{2}-\lambda_{3}),
  \end{align*}
  whence the equality $|\V_{l}|\Lambda_{lrs}=|\V_{r}|\Lambda_{rls}$ since $\lambda_{2}-\lambda_{3}\neq0$.
\end{proof}

\begin{theorem}
   Let $(\V, \B)$ be a $3$-$(v,k,\lambda_{3})_q$ design that admits a tactical decomposition
   \[
     \V = \V_1 \sqcup \cdots \sqcup \V_m,\quad \B = \B_1 \sqcup \cdots \sqcup \B_n.
   \]
   Then
   \[
     \sum^{m}_{s=1}\Lambda_{lrs}=\begin{cases}
                                   |\V_{l}|(q+1)+\frac{q^{v}-q^{2}}{q-1}-1 & l=r\\
                                   |\V_{r}|(q+1) & l\neq r.
                                 \end{cases}
   \]
\end{theorem}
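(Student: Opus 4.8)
The plan is to fix a point $P\in\V_l$ and count, in two different ways, the pairs $(R,S)\in\V_r\times\V_s$ where $S$ ranges over \emph{all} of $\V$ (i.e.\ summing the definition of $\Lambda_{lrs}$ over $s$), using the geometric meaning of $\Lambda_{rs}(P)$: these are the pairs with $\dim\langle P,R,S\rangle\le 2$, minus the degenerate configurations of dimension $1$. Summing over all $s$ and all blocks $\B_s$ of the decomposition just removes the partition of $\V$, so $\sum_{s=1}^m\Lambda_{lrs}$ counts the pairs $(R,S)\in\V_r\times\V$ with $\dim\langle P,R,S\rangle=2$ (together with the contribution of the dimension-$1$ case when $P=R=S$, which is handled separately in Definition~\ref{bigM}). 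So the key identity to establish is
\[
  \sum_{s=1}^m\Lambda_{lrs} = \left|\{(R,S)\in\V_r\times\V : \dim\langle P,R,S\rangle = 2\}\right|
\]
for $l\ne r$, with the analogous statement (with the extra $-1$ for the excluded triple $P=R=S$) when $l=r$.

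Next I would count the right-hand side directly. First I would reduce to fixing $R\in\V_r$: by Corollary~\ref{independent} (or directly by symmetry of the ambient geometry under $\PGL$) the count of $S\in\V$ with $\dim\langle P,R,S\rangle=2$ depends only on whether $R=P$ or not. If $R\ne P$, then $\langle P,R\rangle$ is a fixed $2$-dimensional space, and $\dim\langle P,R,S\rangle=2$ exactly when the point $S$ lies in $\langle P,R\rangle$; the number of such points is $\gs{2}{1}{q}=q+1$. If $R=P$ (which can only occur when $l=r$, and then for exactly one choice of $R$), then $\dim\langle P,R,S\rangle=\dim\langle P,S\rangle=2$ means $S$ is any point not equal to $P$, i.e.\ there are $\gs{v}{1}{q}-1$ such points. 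Since $|\V_r|=\gs{v}{1}{q}$ in the case $l=r$ (wait—no, $\V_r$ is an orbit, not all of $\V$), I keep $|\V_r|$ symbolic: in the case $l\ne r$ all $|\V_r|$ choices of $R$ are distinct from $P$, giving $|\V_r|(q+1)$; in the case $l=r$ there are $|\V_l|-1$ choices $R\ne P$ contributing $q+1$ each, plus the single choice $R=P$ contributing $\gs{v}{1}{q}-1$, and then one subtracts $1$ for the excluded triple $P=R=S$ sitting in dimension $1$.

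Assembling the $l=r$ case: $\sum_s\Lambda_{lrs}=(|\V_l|-1)(q+1)+\left(\gs{v}{1}{q}-1\right)-1$. Now $\gs{v}{1}{q}=\frac{q^v-1}{q-1}$, so $\gs{v}{1}{q}-1=\frac{q^v-q}{q-1}$, and a short manipulation gives $(|\V_l|-1)(q+1)+\frac{q^v-q}{q-1}-1 = |\V_l|(q+1) - (q+1) + \frac{q^v-q}{q-1} - 1 = |\V_l|(q+1) + \frac{q^v-q}{q-1} - (q+2)$. To match the claimed $|\V_l|(q+1)+\frac{q^v-q^2}{q-1}-1$ I need $\frac{q^v-q}{q-1}-(q+2) = \frac{q^v-q^2}{q-1}-1$, i.e.\ $\frac{q^v-q}{q-1}-\frac{q^v-q^2}{q-1} = q+1$, i.e.\ $\frac{q^2-q}{q-1}=q$, which is false by $1$ — so I would recount the $R=P$ contribution more carefully, likely the point is that the definition already excludes $\dim\le 1$, so the "$-1$" is not separately subtracted and instead the $R=P$ term should be $\gs{v}{1}{q}-1$ points $S$ with $\dim\langle P,S\rangle=2$ but this is the full $\Lambda_{ll}(P)$-type count with no further subtraction; tracking this bookkeeping precisely is the one delicate point. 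The $l\ne r$ case has no such subtlety and gives $|\V_r|(q+1)$ immediately. The main obstacle, then, is purely the careful handling of the degenerate case $P=R$ (and $P=R=S$) in the $l=r$ branch so that the closed form $|\V_l|(q+1)+\frac{q^v-q^2}{q-1}-1$ comes out exactly; everything else is a direct double count combined with Corollary~\ref{independent} and the Gaussian-binomial identity $\gs{v}{1}{q}=\frac{q^v-1}{q-1}$.
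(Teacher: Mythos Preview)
Your approach is exactly the paper's: fix $P\in\V_l$, observe that $\sum_{s=1}^m\Lambda_{lrs}$ counts pairs $(R,S)\in\V_r\times\V$ with $\dim\langle P,R,S\rangle=2$, and then count this set directly by fixing $R$ first. Your $l\neq r$ computation is correct and identical to the paper's.

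Your only stumble is the self-inflicted extra ``$-1$'' in the $l=r$ case, and your own diagnosis of it is right. The definition of $\Lambda_{rs}(P)$ requires $\dim\langle P,R,S\rangle=2$ exactly, so when $R=P$ the admissible $S$ are precisely the points $S\neq P$, contributing $\gs{v}{1}{q}-1$; the triple $P=R=S$ has $\dim=1$ and is already excluded---there is nothing further to subtract. With that correction the $l=r$ count is
\[
(|\V_l|-1)(q+1)+\gs{v}{1}{q}-1
=|\V_l|(q+1)-(q+1)+\frac{q^v-1}{q-1}-1,
\]
and since $\frac{q^v-1}{q-1}-(q+1)=\frac{q^v-1-(q^2-1)}{q-1}=\frac{q^v-q^2}{q-1}$, this equals $|\V_l|(q+1)+\frac{q^v-q^2}{q-1}-1$ on the nose. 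So the ``delicate point'' evaporates once you drop the spurious subtraction; no other ingredient is needed.
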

\begin{proof}
  Let $P$ be a point of $\V_{l}$. We count the set
  \[
     \{(R,S)\in\V_{r}\times\V\, : \, \dim\langle P, R, S \rangle = 2\}
  \]
  in two ways. On the one hand,
  \begin{align*}
     &\{(R,S)\in\V_{r}\times\V\, : \, \dim\langle P, R, S \rangle = 2\}\\&=\bigsqcup^{m}_{s=1}\{(R,S)\in\V_{r}\times\V_{s}\, : \, \dim\langle P, R, S \rangle = 2\},
  \end{align*}
  so the size of this set equals $\sum^{m}_{s=1}\Lambda_{lrs}$ by Definition \ref{bigM} and Corollary \ref{independent}.
  \par On the other hand, if $l\neq r$, then for any point $R\in\V_{r}$ we find $q+1$ different $1$-spaces in the 2-dimensional space $\left\langle P,R\right\rangle$, so $q+1$ choices for the point $S$, therefore the size of this set equals
  \[
    |\V_{r}|(q+1).
  \]
  If $l=r$, then for any point in $R\in\V_{r}\setminus\{P\}$ we find $q+1$ different $1$-spaces in the $2$-dimensional space $\left\langle P,R\right\rangle$, so $q+1$ choices for the point $S$. For the point $R=P$, any point $S\in\V\setminus\{P\}$ determines a 2-dimensional space $\left\langle P,R,S\right\rangle=\left\langle P,S\right\rangle$. Hence, the size of this set for $l = r$ equals
  \[
    \left(|\V_{l}|-1\right)(q+1)+\left(\frac{q^{v}-1}{q-1}-1\right)\, = \, |\V_{l}|(q+1)+\frac{q^{v}-q^{2}}{q-1}-1.
  \]
  This concludes the proof.
\end{proof}

Recall that the 1-dimensional subspaces of $\V$ are called points. From now on, we call the 2-dimensional subspaces of $\V$ \emph{lines}. The set of lines of $\V$ will often be denoted by by $\Ll$. Note that the pair $(\V, \Ll)$ is a trivial $2$-$(v, 2, 1)_q$ design. Every group $G \le \PGammaL(\V)$ induces a tactical decomposition on $(\V, \Ll)$. This obtained tactical decomposition is closely related to parameter $\Lambda_{rls}$.

\begin{theorem}\label{thm:designlines}
  Let $(\V, \B)$ be a $3$-$(v,k,\lambda_{3})_q$ design that admits a $G$-induced tactical decomposition
  \[
    \V = \V_1 \sqcup \cdots \sqcup \V_m,\quad \B = \B_1 \sqcup \cdots \sqcup \B_n.
  \]
  Let $\Ll$ be the set of lines of $\V$. We consider the $G$-induced tactical decomposition of the trivial design $(\V, \Ll)$:
  \[
    \V = \V_1 \sqcup \cdots \sqcup \V_m, \qquad \Ll = \Ll_1 \sqcup \cdots \sqcup \Ll_{\omega},
  \]
  with associated tactical decomposition matrices $[\rho_{ij}^{\Ll}]$ and $[\kappa_{ij}^{\Ll}]$. Then
  \[
    \Lambda_{rls} =
    \begin{cases}
     \displaystyle{ \sum_{j = 1}^{\omega} \rho_{lj}^{\Ll} \kappa_{rj}^{\Ll} \kappa_{sj}^{\Ll} - \gs{v-1}{1}{q}} & l = r = s \\
      \displaystyle{\sum_{j = 1}^{\omega} \rho_{lj}^{\Ll} \kappa_{rj}^{\Ll} \kappa_{sj}^{\Ll} } &  \mbox{otherwise.}
    \end{cases}
  \]
\end{theorem}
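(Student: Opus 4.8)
The plan is to rerun the double-counting argument of Lemma~\ref{dclemma} almost verbatim, but with the design $(\V,\B)$ replaced by the trivial design $(\V,\Ll)$, whose blocks are the $2$-dimensional subspaces. First I would record the set-up: every $g\in G$ maps $2$-spaces to $2$-spaces and fixes $\Ll$ setwise, so $G\le\Aut(\V,\Ll)$, and Theorem~\ref{groupinduced} applied to $(\V,\Ll)$ yields the tactical decomposition $\V=\V_1\sqcup\cdots\sqcup\V_m$, $\Ll=\Ll_1\sqcup\cdots\sqcup\Ll_\omega$ with well-defined matrices $[\rho^{\Ll}_{ij}]$ and $[\kappa^{\Ll}_{ij}]$. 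Its point-orbit partition is literally the same $\V=\V_1\sqcup\cdots\sqcup\V_m$ arising from the action on $(\V,\B)$, so the indices $l,r,s$ have the same meaning on both sides of the claimed identity.

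Next I would fix a point $P\in\V_l$ and double-count the set
\[
  T=\{(R,S,\ell)\in\V_r\times\V_s\times\Ll \ : \ P,R,S\le\ell\}.
\]
Grouping the triples by the line-orbit containing $\ell$: for each $j$ there are $\rho^{\Ll}_{lj}$ lines of $\Ll_j$ through $P$, and each of them contains $\kappa^{\Ll}_{rj}$ points of $\V_r$ and $\kappa^{\Ll}_{sj}$ points of $\V_s$, so
\[
  |T|=\sum_{j=1}^{\omega}\rho^{\Ll}_{lj}\,\kappa^{\Ll}_{rj}\,\kappa^{\Ll}_{sj},
\]
in exact analogy with \eqref{doublecounting}. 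Grouping instead by the pair $(R,S)$: a line $\ell$ contains $P,R,S$ iff $\langle P,R,S\rangle\le\ell$, so the number of such $\ell$ depends only on $\dim\langle P,R,S\rangle\in\{1,2,3\}$, being $0$ when this dimension is $3$ (a line cannot contain three linearly independent points), $1$ when it is $2$, and $\gs{v-1}{1}{q}$ (the number of $2$-spaces through a fixed point) when it is $1$. As in the proof of Lemma~\ref{dclemma}, the last case occurs exactly when $P=R=S$, hence only if $l=r=s$ and then only for the pair $(P,P)$; and the middle case occurs for precisely $\Lambda_{rs}(P)=\Lambda_{lrs}$ pairs, by Definition~\ref{bigM} and Corollary~\ref{independent}.

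Equating the two evaluations of $|T|$ then gives
\[
  \sum_{j=1}^{\omega}\rho^{\Ll}_{lj}\,\kappa^{\Ll}_{rj}\,\kappa^{\Ll}_{sj}
  =\Lambda_{lrs}+\begin{cases}\gs{v-1}{1}{q} & l=r=s\\ 0 & \text{otherwise,}\end{cases}
\]
which is the asserted formula once the correction term is moved to the other side. I do not expect a genuine obstacle here: the only delicate points are the two degenerate span-dimensions --- that $\dim\langle P,R,S\rangle=3$ contributes nothing since no $2$-space can contain three independent points, and that $\dim\langle P,R,S\rangle=1$ forces $P=R=S$ and thus contributes the extra term $\gs{v-1}{1}{q}$ exactly in the diagonal case $l=r=s$. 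Everything else is the bookkeeping already carried out for $(\V,\B)$ in Lemma~\ref{dclemma}, transported to the trivial design $(\V,\Ll)$.
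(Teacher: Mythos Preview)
Your proposal is correct and follows essentially the same double-counting argument as the paper: both fix $P\in\V_l$ and count triples $(R,S,\ell)$ with $P,R,S$ on the line $\ell$, once via the tactical decomposition of $(\V,\Ll)$ and once via $\dim\langle P,R,S\rangle$. The only cosmetic difference is that the paper phrases the triple set with the condition $\langle P,R,S\rangle=\ell$ rather than your containment condition $P,R,S\le\ell$, but the resulting correction term $\gs{v-1}{1}{q}$ in the diagonal case $l=r=s$ arises for exactly the same reason.
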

\begin{proof}
  Let $P$ be a point of $\V_l$. From the definition of the coefficients $\rho_{ij}^{\Ll}$ and $\kappa_{ij}^{\Ll}$ it follows that each point of $\V_i$ lies on $\rho_{ij}^{\Ll}$ lines of $\Ll_j$, and each line of $\Ll_j$ contains $\kappa_{ij}^{\Ll}$ points of $\V_i$. Since $(\V, \Ll)$ is a $q$-Steiner system with $t=2$ we know that
  \begin{align*}
    \Lambda_{lrs}  &=  \left|\{(R, S) \in \V_r \times \V_s\, : \, \dim \langle P, R, S \rangle = 2 \}\right| \\% (P,R,S) 
    &= \left| \{(R, S,\ell) \in \V_r \times \V_s\times\Ll \, : \, \langle P, R, S \rangle = \ell \}\right|.% (P,R,S, \ell) \, : \, 
  \end{align*}
  Counting the second set yields
	 \[
	  \Lambda_{rls} =
	  \begin{cases}
	    \displaystyle{ \sum_{j = 1}^{\omega} \rho_{lj}^{\Ll} \kappa_{rj}^{\Ll} \kappa_{sj}^{\Ll} - \gs{v-1}{1}{q}} & l = r = s \\
	    \displaystyle{ \sum_{j = 1}^{\omega} \rho_{lj}^{\Ll} \kappa_{rj}^{\Ll} \kappa_{sj}^{\Ll} }  &  \mbox{otherwise.}
	  \end{cases}\qedhere
	\]
\end{proof}

\begin{remark}\label{transitive}
  Recall that only a few examples of non-trivial $3$-designs over finite fields are known. These non-trivial examples were obtained using an automorphism group $G$ acting transitively on the set of points of the a $3$-$(v,k,\lambda_{3})_q$ design $(\V, \B)$. In this case (if the group is acting transitively on the set of points of $(\V, \B)$) there only one $\Lambda$ parameter, namely $\Lambda_{111}$. As a corollary of Lemma \ref{thm:designlines} or directly using Definition \ref{bigM} one can prove that
  \[
    \Lambda_{111} = q(q+2)\gs{v-1}{1}{q}.
  \]
  Unfortunately, if $G$ acts transitively on the points of $\V$ the results of Lemma \ref{secondequations} and Theorem \ref{maintheorem} cannot be used. For in this case, by Lemma \ref{firstequations} we know that $\kappa_{1j}=\gs{k}{1}{q}$ and hence the results of Lemma \ref{secondequations} and Theorem \ref{maintheorem} reduce to $\lambda_{1}=\sum^{n}_{j=1}\rho_{1j}$ by \eqref{tdesignissdesign}, a result we already know from Lemma \ref{firstequations}.
\end{remark}

\comments{
Recall that only a few examples of non-trivial $3$-designs over finite fields are known%: a $3$-$(8,4,11)_2$ design, a $3$-$(8,4,20)_2$ design (see \cite{BKL07}), and a $3$-$(8,4,15)_2$ design (see \cite{B13})
. These non-trivial examples were obtained using automorphism groups acting transitively on the set of points of $(\V, \B)$. In this case (if the group is acting transitively on the set of points of $(\V, \B)$) the parameter $\Lambda_{lrs}$ is immediate.

\begin{lemma}
  Let $(\V, \B)$ be a $3$-$(v,k,\lambda_{3})_q$ design. Let $G\leq\Aut(\V, \B)$ be a group that acts transitively on the set of points of $(\V, \B)$ and let
  \[
    \V = \V_1,\quad \B = \B_1 \sqcup \cdots \sqcup \B_n.
  \]
  be a $G$-induced tactical decomposition of $(\V, \B)$. Then
  \[
    \Lambda_{111} = q(q+2)\gs{v-1}{1}{q}.
  \]
\end{lemma}
\begin{proof}
  One can prove this as a corollary of Lemma \ref{thm:designlines} or directly using Definition \ref{bigM}.
  %Let $\Ll$ be the set of lines of $\V$. We know that the trivial design $(\V, \Ll)$ has parameters $2$-$(v,2,1)_{q}$ and hence also parameters $1$-$(v,2,\lambda^{\Ll}_{1})_{q}$ with $\lambda^{\Ll}_{1}=\gs{v-1}{1}{q}$. We note that $G$ induces a tactical decomposition on the trivial design $(\V, \Ll)$:
  %\[
    %\V = \V_1, \qquad \Ll = \Ll_1 \sqcup \cdots \sqcup \Ll_{\omega}.
  %\]
  %Since $\V = \V_1$, the number of points from $\V_1$ in each line $\ell \in \Ll_j$ equals
  %\[
    %\kappa_{1j}^{\Ll} = \gs{2}{1}{q} = q+1.
  %\]
  %By Theorem~\ref{thm:designlines} and Lemma~\ref{firstequations} it follows that
  %\begin{align*}
    %\Lambda_{111} &= \sum_{j = 1}^{\omega} \rho_{1j}^{\Ll}\kappa_{1j}^{\Ll} \kappa_{1j}^{\Ll} - \gs{v-1}{1}{q}= \sum_{j = 1}^{\omega} \rho_{1j}^{\Ll}(q+1)^{2} - \gs{v-1}{1}{q}\\
    %&=(q+1)^{2}\sum_{j = 1}^{\omega} \rho_{1j}^{\Ll} - \gs{v-1}{1}{q}=(q+1)^{2}\gs{v-1}{1}{q}- \gs{v-1}{1}{q}.\qedhere
  %\end{align*}
\end{proof}
}

The next theorem and the subsequent remark deal with $G$-induced tactical decompositions, with $|G|=p$ prime, having a fixed point. Note that such a fixed point (orbit of size one) is guaranteed to exist if $p$ is not a divisor of the number of points.

\begin{theorem}\label{primefixedpoint}
  Let $(\V, \B)$ be a $3$-$(v,k,\lambda_{3})_q$ design and let
   \[
     \V = \V_1 \sqcup \cdots \sqcup \V_m,\quad \B = \B_1 \sqcup \cdots \sqcup \B_n,
   \]
   be a $G$-induced tactical decomposition, $G\leq\Aut(\V, \B)$, with $|G|=p$ prime. If $\V_{l}$ is an orbit of size one, then $\Lambda_{lrs}\in\{0,1,p,p^{2}\}$ for $r,s=1,\dots,m$.
  %fixed point, then sigma's equal ...
\end{theorem}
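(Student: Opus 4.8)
The plan is to exploit two facts about a group $G$ of prime order $p$: every $G$-orbit has size $1$ or $p$, and a nonempty $G$-invariant subset of a single $G$-orbit is the whole orbit. These I would apply to the pencil of lines through the fixed point. Write $\V_l=\{P\}$. Since the decomposition is $G$-induced, each $\V_i$ is a $G$-orbit, so $|\V_i|\in\{1,p\}$, and $\V_i$ has no $G$-fixed point unless $|\V_i|=1$. First I would dispose of the cases with $l\in\{r,s\}$: if $r=s=l$ the only pair in $\V_l\times\V_l$ is $(P,P)$, which spans a $1$-space, so $\Lambda_{lll}=0$; if exactly one of $r,s$ equals $l$, say $r=l\neq s$, then $R=P$ is forced and $\dim\langle P,S\rangle=2$ for every $S\in\V_s$ (as $S\neq P$), so $\Lambda_{lls}=|\V_s|\in\{1,p\}$, and the case $s=l\neq r$ follows from $\Lambda_{lrs}=\Lambda_{lsr}$. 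The substantial case is thus $r,s\neq l$.

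For $r,s\neq l$ I would begin from the identity $\Lambda_{lrs}=\sum_{\ell}n_r(\ell)\,n_s(\ell)$, the sum over lines $\ell$ through $P$, where $n_i(\ell)=|\V_i\cap\ell|$: a pair $(R,S)\in\V_r\times\V_s$ with $\dim\langle P,R,S\rangle=2$ is the same datum as a line $\ell\ni P$ together with points $R\in\V_r\cap\ell$ and $S\in\V_s\cap\ell$ (note $R\neq P$ because $r\neq l$), which is exactly Theorem~\ref{thm:designlines} applied to the trivial line-design $(\V,\Ll)$. Next, for $i\in\{r,s\}$ let $\Pi_i$ be the set of lines through $P$ that meet $\V_i$. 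The function $\ell\mapsto n_i(\ell)$ is constant on $G$-orbits of lines through $P$, these orbits have size $1$ or $p$, and $\sum_{\ell}n_i(\ell)=|\V_i|\le p$. This forces a dichotomy: either (i) $\Pi_i=\{\ell^{\ast}\}$ for a single $G$-fixed line $\ell^{\ast}$, and then $\V_i\cap\ell^{\ast}$ is a nonempty $G$-invariant subset of the orbit $\V_i$, hence equals $\V_i$, so $n_i(\ell^{\ast})=|\V_i|$; or (ii) $\Pi_i$ is a single $G$-orbit of $p$ lines, each carrying exactly one point of $\V_i$, which forces $|\V_i|=p$.

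To conclude I would combine these possibilities in $\Lambda_{lrs}=\sum_{\ell\in\Pi_r\cap\Pi_s}n_r(\ell)\,n_s(\ell)$. If $\Pi_r\cap\Pi_s=\emptyset$ the value is $0$. If $\Pi_r$ is of type (i) with fixed line $\ell^{\ast}$, then the only possible common line is $\ell^{\ast}$, and since a $G$-fixed line cannot lie in a $G$-orbit of size $p$, the set $\Pi_s$ must also be of type (i) with the same line $\ell^{\ast}$; hence $\Lambda_{lrs}=n_r(\ell^{\ast})\,n_s(\ell^{\ast})=|\V_r|\,|\V_s|\in\{1,p,p^{2}\}$, and symmetrically if $\Pi_s$ is of type (i). If both $\Pi_r$ and $\Pi_s$ are of type (ii), then sharing a line forces $\Pi_r=\Pi_s$ (orbits are disjoint), so $\Lambda_{lrs}=p\cdot1\cdot1=p$. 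Together with the cases treated above this gives $\Lambda_{lrs}\in\{0,1,p,p^{2}\}$ for all $r,s$.

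The step I expect to be the crux is the dichotomy (i)/(ii): one must argue carefully that a $G$-fixed line through $P$ meeting $\V_i$ must contain all of $\V_i$ (this is exactly where transitivity of $G$ on the orbit $\V_i$ is used), and then that the counting constraint $\sum_{\ell}n_i(\ell)=|\V_i|\le p$ simultaneously excludes a fixed contributing line coexisting with a size-$p$ orbit of contributing lines, and excludes the existence of two size-$p$ orbits of contributing lines. After that, the combination of cases for $\Lambda_{lrs}$ is routine bookkeeping.
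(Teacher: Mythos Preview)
Your proof is correct and follows essentially the same approach as the paper: both arguments reduce to analysing how each point orbit $\V_i$ meets the pencil of lines through the fixed point $P$, establish the dichotomy that $\V_i$ either lies entirely on one $G$-fixed line or meets each line of a single size-$p$ line orbit in exactly one point, and then combine the two cases for $\V_r$ and $\V_s$. Your presentation is slightly more explicit about why the dichotomy holds (via the counting constraint $\sum_\ell n_i(\ell)=|\V_i|\le p$ together with constancy of $n_i$ on line orbits), whereas the paper states the dichotomy directly, but the substance is the same.
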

\begin{proof}
  Since $|G|$ is prime the orbits $\V_{i}$ have size $1$ or $p$. Let $P$ be the unique point in $\V_{l}$. A line $\ell$ through $P$ is either fixed by $G$ or else $\ell^{G}$ is an orbit of $p$ different lines through $P$. An orbit $\V_{i}$ of size $1$, $i\neq l$, is necessarily contained in a line through $P$ that is fixed by $G$; an orbit $\V_{i}$ of size $p$ is contained in a line through $P$ that is fixed or has one point in common with each line of an orbit $\ell^{G}$ of $p$ lines.
  \par If $l=r=s$, then $\Lambda_{lrs}=0$. If $l=r\neq s$, then $\Lambda_{lrs}=|\V_{s}|$ (analogously if $l=s\neq r$). Now, we assume that $r\neq l\neq s$. If $\V_{r}$ is an orbit of size one, equal to $\{R\}$, then the line $\left\langle P,R\right\rangle$ is fixed, and $\Lambda_{lrs}$ equals $0$ or $|\V_{s}|$. If $\V_{s}$ is an orbit of size one, the situation is equivalent. If both $\V_{r}$ and $\V_{s}$ are orbits of size $p$, then $\Lambda_{lrs}$ equals $p^{2}$ (in case $\V_{r}$ and $\V_{s}$ are on the same fixed line), $p$ (in case $\V_{r}$ and $\V_{s}$ have one point in common with each line of the same line orbit $\ell^{G}$) or $0$ (else).
\end{proof}

\begin{remark}\label{remarkprimefixedpoint}
  Given a $G$-induced tactical decomposition of a $q$-design $(\V, \B)$ with $t=3$ and $|G|=p$ prime, we want to compute the $\Lambda$-values related to at least one orbit of size $1$. Let $\V=\V_1 \sqcup \cdots \sqcup \V_m$ be the point set decomposition and let $\V_{l}=\{P\}$ be an orbit of size $1$. A line $\ell$ through $P$ is either fixed by $G$ or is contained in a line orbit of size $p$. If $\ell$ is fixed, it contains only entire orbits of points. If $\ell$ is not fixed, then no two points on $\ell$ belong to the same point orbit. Hence, we can define a partition $\Omega$ of the set $\{\V_{1}, \dots, \V_{l-1}, \V_{l+1}, \dots, \V_{m}\}$ in the following way: two orbits are in the same partition class if and only if they are on the same fixed line through $P$ or if they have one point in common with each line of the same line orbit of size $p$ through $P$. We find
  \[
    \Omega=\Omega_{1}\sqcup\dots\sqcup\Omega_{a}\sqcup\Omega_{a+1}\sqcup\dots\sqcup\Omega_{b},
  \]
  with each $\Omega_{i}=\{\V_{j_{1}},\dots,\V_{j_{k_{i}}}\}$. Let $\Omega_{1},\dots,\Omega_{a}$ be the partition classes that correspond to fixed lines and let $\Omega_{a+1},\dots,\Omega_{b}$ be the partition classes that correspond to line orbits through $P$ of size $p$.
  \par Let $r\neq l\neq s$. Following the arguments in the proof of Theorem \ref{primefixedpoint} the value $\Lambda_{lrs}$ equals $0$ if $\V_{r}$ and $\V_{s}$ do not belong to the same partition class $\Omega_{i}$. If they do belong to the same partition class $\Omega_{i}$, then $\Lambda_{lrs}=|\V_{r}|\cdot|\V_{s}|$ if $1\leq i\leq a$ and $\Lambda_{lrs}=|\V_{r}|=|\V_{s}|=p$ if $a<i\leq b$. To summarize,
  \[
	  \Lambda_{lrs} =
	  \begin{cases}
		  |\V_{r}|\cdot|\V_{s}| & \V_r, \V_s \in \Omega_i, \, i \leq a, \\
		  p & \V_r, \V_s \in \Omega_i, \, a < i \leq b,\\
		  0 & \mbox{otherwise}. \\
	  \end{cases}
  \]
\end{remark}

The final theorem in this section gives a bound on the value of $\Lambda_{lrs}$ given a tactical decomposition induced by a group of prime order. Considering Remark \ref{remarkprimefixedpoint} this is of interest when the three orbits involved are not fixed points.

\begin{theorem}
   Let $(\V, \B)$ be a $3$-$(v,k,\lambda_{3})_q$ design and let
   \[
   \V = \V_1 \sqcup \cdots \sqcup \V_m,\quad \B = \B_1 \sqcup \cdots \sqcup \B_n,
   \]
   be a $G$-induced tactical decomposition, $G\leq\Aut(\V, \B)$, with $|G|=p$ prime. If $\Lambda_{lrs}\neq p^{2}$, then $\Lambda_{lrs}\leq p\sqrt{p-\frac{3}{4}}+\frac{p}{2}$, for $l,r,s=1,\dots,m$.
\end{theorem}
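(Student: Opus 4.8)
The plan is to use the results already available to pass to the one essential case in which $\V_{l},\V_{r},\V_{s}$ are all point-orbits of size $p$, and there to bound the intersection numbers $|\ell\cap\V_{r}|$ of a line $\ell$ with an orbit by the maximum size of a Sidon set in $\Z_{p}$. First I would clear the cases in which one of the three orbits is a fixed point. If $\V_{l}$ is a fixed point, Theorem~\ref{primefixedpoint} and Remark~\ref{remarkprimefixedpoint} give $\Lambda_{lrs}\in\{0,1,p,p^{2}\}$; if $\V_{l}$ has size $p$ but $\V_{r}$ or $\V_{s}$ is a fixed point, the relation $|\V_{l}|\cdot\Lambda_{lrs}=|\V_{r}|\cdot\Lambda_{rls}$ together with that case and the integrality of $\Lambda_{lrs}$ again forces $\Lambda_{lrs}\in\{0,1,p,p^{2}\}$. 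In each of these cases, discarding the value $p^{2}$ leaves $\Lambda_{lrs}\le p\le\frac p2+p\sqrt{p-\frac34}$, so from now on $|\V_{l}|=|\V_{r}|=|\V_{s}|=p$, and $G$ acts regularly on each of the three orbits.

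Fix $P\in\V_{l}$. As in the proofs of Lemma~\ref{dclemma} and Theorem~\ref{thm:designlines}, a pair $(R,S)\in\V_{r}\times\V_{s}$ has $\dim\langle P,R,S\rangle=2$ exactly when $R$ and $S$ lie with $P$ on a single line through $P$, so (provided $l,r,s$ are not all equal)
\[
  \Lambda_{lrs}=\sum_{\ell\ni P}|\ell\cap\V_{r}|\cdot|\ell\cap\V_{s}|,
\]
the sum running over all lines $\ell$ of $\V$ through $P$; for $l=r=s$ one subtracts $\gs{v-1}{1}{q}$, as in Theorem~\ref{thm:designlines}. The heart of the argument is the estimate $|\ell\cap\V_{r}|\le\tfrac12(1+\sqrt{4p-3})$ for every line $\ell$ that is not $G$-fixed. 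To prove it, fix a generator $g$ of $G$; if $\ell\cap\V_{r}\neq\emptyset$, pick $R_{0}\in\ell\cap\V_{r}$ and put $D=\{a\in\Z_{p}:g^{a}R_{0}\in\ell\}$, so $0\in D$ and $|D|=|\ell\cap\V_{r}|$. For $t\neq0$ the line $g^{t}\ell$ differs from $\ell$ (its $G$-orbit has size $p$), hence meets $\ell$ in at most one point; since $g^{t}\ell\cap\V_{r}=\{g^{a}R_{0}:a\in D+t\}$, this gives $|D\cap(D+t)|\le1$ for all $t\neq0$, so $D$ is a Sidon set in $\Z_{p}$. Its $|D|(|D|-1)$ differences of distinct elements are then pairwise distinct and nonzero, forcing $|D|(|D|-1)\le p-1$, i.e.\ $|D|\le\tfrac12(1+\sqrt{4p-3})$.

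To assemble the pieces, note that a $G$-fixed line through $P$ can exist only when $\dim\langle\V_{l}\rangle=2$, in which case it is unique and equal to $\langle\V_{l}\rangle$ (two such lines would each contain the $p\ge 2$ points of $\V_{l}$ yet meet only in $P$), and it meets any orbit of size $p$ either trivially or in the whole orbit. Unless this exceptional line contains all of $\V_{r}$ or all of $\V_{s}$, every line through $P$ meets each of $\V_{r},\V_{s}$ in at most $\tfrac12(1+\sqrt{4p-3})$ points; applying this bound to the factor indexed by whichever of $r,s$ differs from $l$ (possible since $l,r,s$ are not all equal) and using $\sum_{\ell\ni P}|\ell\cap\V_{s}|=|\V_{s}|=p$ yields
\[
  \Lambda_{lrs}\le\tfrac12(1+\sqrt{4p-3})\cdot p=\frac p2+p\sqrt{p-\tfrac34}.
\]
In the complementary sub-cases the exceptional line $\langle\V_{l}\rangle$ absorbs all of $\V_{r}$ or all of $\V_{s}$, and then every other line through $P$ either misses that orbit or meets $\V_{l}$ only in $P$, so a short direct count gives $\Lambda_{lrs}\in\{0,p,p^{2}\}$; once $p^{2}$ is excluded this is again at most $\frac p2+p\sqrt{p-\frac34}$.

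The genuinely delicate case is $l=r=s$ with $|\V_{l}|=p$, where $\Lambda_{lll}=\sum_{\ell\ni P}|\ell\cap\V_{l}|^{2}-\gs{v-1}{1}{q}$ and every $|\ell\cap\V_{l}|\ge1$, so the Sidon bound alone is too weak. Here I would add the identity $\sum_{\ell\ni P}(|\ell\cap\V_{l}|-1)=|\V_{l}|-1=p-1$ (each point of $\V_{l}\setminus\{P\}$ lies on a unique line through $P$), which rewrites $\Lambda_{lll}=\sum_{\ell\ni P}y_{\ell}^{2}+2(p-1)$ with $y_{\ell}=|\ell\cap\V_{l}|-1\ge0$, $\sum_{\ell}y_{\ell}=p-1$ and --- when $\dim\langle\V_{l}\rangle\ge3$, so all lines through $P$ are non-fixed --- $y_{\ell}\le\tfrac12(1+\sqrt{4p-3})-1$, and then maximise this integer program against $\frac p2+p\sqrt{p-\frac34}-2(p-1)$; the planar case $\dim\langle\V_{l}\rangle=2$, where $\Lambda_{lll}=p^{2}-1$, must also be matched against the bound. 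I expect this last case to be the main obstacle: the crude optimum of the integer program is attained only by very special configurations (a subplane of $\V$ on which $G$ acts as a Singer cycle), so the estimate has to be sharpened past those, or such configurations excluded inside a $3$-design --- which is the step that does not reduce to routine counting.
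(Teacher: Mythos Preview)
Your overall strategy coincides with the paper's: dispose of orbits of size $1$ via Theorem~\ref{primefixedpoint}, bound $|\ell\cap\V_{j}|$ for non-fixed lines $\ell$ through $P$, and then combine. Your Sidon-set argument is an elegant reformulation of the paper's direct count (the paper also arrives at $n_{\ell}(n_{\ell}-1)+1\le p$), and your one-factor bound $\sum n_{i}n'_{i}\le K_{p}\sum n'_{i}\le pK_{p}$ is simpler than the paper's Cauchy--Schwarz step while giving the same conclusion when $r\neq l$ and $s\neq l$.

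Two remarks. First, in the case $l=r\neq s$ you have the roles swapped: the sum identity $\sum_{\ell\ni P}|\ell\cap\V_{s}|=p$ needs $s\neq l$, so you must \emph{sum} the factor whose index differs from $l$ and \emph{bound} the other. Your Sidon argument nowhere uses $r\neq l$, so $|\ell\cap\V_{l}|\le K_{p}$ holds for non-fixed $\ell$ as well, and the estimate goes through. The paper, incidentally, does not isolate this case either: its inequalities $\sum n_{i}\le p$ and $\sum n'_{i}\le p$ tacitly assume $r\neq l$ and $s\neq l$.

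Second, and more importantly, your worry about $l=r=s$ is not a gap in \emph{your} reasoning but a gap in the \emph{statement}. The paper's proof does not treat this case (the formula $\Lambda_{lrs}=\sum n_{i}n'_{i}$ is off by $\gs{v-1}{1}{q}$ there, and $\sum n_{i}\le p$ fails), and in fact the bound is false for $l=r=s$: in the paper's own worked example in Section~\ref{examples} one has $p=3$ and $\Lambda_{lll}=8$, which is $\neq p^{2}=9$ yet exceeds $pK_{p}=6$. This is precisely your ``planar case'' $\dim\langle\V_{l}\rangle=2$, where $\Lambda_{lll}=p^{2}-1$; for every prime $p\ge 3$ this violates the asserted inequality. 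So do not try to push the integer program further: the theorem should be read (or amended) with the proviso that $l,r,s$ are not all equal, and with that restriction your argument is complete once the index swap above is made.
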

\begin{proof}
  Let $\V_{l}$, $\V_{r}$ and $\V_{s}$ be three point orbits. If $1\in\{|\V_{l}|,|\V_{r}|,|\V_{s}|\}$, the result follows from Theorem \ref{primefixedpoint}. So, we assume $|\V_{l}|=|\V_{r}|=|\V_{s}|=p$. Let $P$ be a point of $\V_{l}$.
  \par We first show that an orbit $\V_{j}$, $j\neq l$ and with $|\V_{j}|=p$, has either $p$ or else at most $K_{p}=\sqrt{p-\frac{3}{4}}+\frac{1}{2}$ points in common with a line through $P$. Let $\ell$ be a line through $P$ having at least one point $Q$ in common with $\V_{j}$. If $n_{\ell}=|\ell\cap\V_{j}|<p$, then $\V_{j}\not\subseteq\ell$, hence the line $\ell$ is not fixed, so $\ell^{G}$ is a set of $p$ different lines. Now, let $\{g_{1},\dots,g_{n_{\ell}}\}$ be the elements of $G$ that map the points of $\ell\cap\V_{j}$ onto $Q$. The lines $\ell^{g_{1}},\dots,\ell^{g_{n_{\ell}}}$ are $n_{\ell}$ different lines through $Q$, each containing $n_{\ell}$ distinct points of $\V_{j}$ (including $Q$). Hence, we find at least $n_{\ell}(n_{\ell}-1)+1$ points of $\V_{j}$. As $|\V_{j}|=p$, necessarily $n^{2}_{\ell}-n_{\ell}+1\leq p$. It follows that $n_{\ell}\leq\sqrt{p-\frac{3}{4}}+\frac{1}{2}$.
  \par Now let $\ell_{1},\dots,\ell_{c}$ be the lines through $P$ containing at least one point of $\V_{r}$ and at least one point of $\V_{s}$. Denote $|\ell_{i}\cap\V_{r}|$ and $|\ell_{i}\cap\V_{s}|$ by $n_{i}$ and $n'_{i}$ respectively, $i=1,\dots,c$. Then $\Lambda_{lrs}=\sum^{c}_{i=1}n_{i}n'_{i}$. Moreover, $\sum^{c}_{i=1}n_{i}\leq p$ and $\sum^{c}_{i=1}n'_{i}\leq p$. If there is a line through $P$ containing $p$ points of $\V_{r}$, then this line is fixed (necessarily $c\in\{0,1\}$). If $\V_{s}$ is also contained in this line, then $\Lambda_{lrs}=p^{2}$ and if it is not, then $\Lambda_{lrs}=0$. So, now we can assume that $1\leq n_{i},n'_{i}\leq K_{p}$. By the Cauchy-Schwarz inequality
  \[
    \sum^{c}_{i=1}n_{i}n'_{i}\leq\sqrt{\left(\sum^{c}_{i=1}n^{2}_{i}\right)\left(\sum^{c}_{i=1}n'^{2}_{i}\right)}.
  \]
  Since $1\leq n_{i},n'_{i}\leq K_{p}$, it is immediate that
  \[
    \sum^{c}_{i=1}n^{2}_{i}\leq \frac{p}{K_{p}}K^{2}_{p}=pK_{p} \text{ and analogously } \sum^{c}_{i=1}n'^{2}_{i}\leq pK_{p}.
  \]
  So,
  \[
    \sum^{c}_{i=1}n_{i}n'_{i}\leq\sqrt{(pK_{p})(pK_{p})}=pK_{p},
  \]
  which proves the inequality.
\end{proof}

\section{Application to known designs}\label{examples}

In this final section we will discuss the application of the results in Section \ref{sec:3-designs} to some known $3$-designs. First we look at a design $(\V,\B)$ with parameters $3$-$(4,3,1)_{2}$, where $\V=\F^{4}_{2}$. One can see directly that this is the design of all $3$-spaces in $\V$, but as proof of concept we construct it using a tactical decomposition based on a prescribed automorphism group. We consider the group $G\leq\PGammaL(\V)$ generated by
\[
  \begin{bmatrix}
    0&0&0&1\\
    0&0&1&0\\
    0&1&1&0\\
    1&0&0&1
  \end{bmatrix}.
\]
The group $G$ is a cyclic group of order $3$; its action on the points of $\V$ yields five orbits:
\[
  \V=\V_{1}\sqcup \cdots \sqcup \V_{5},
\]
with orbit representatives $\left\langle\left[1,0,0,0\right]\right\rangle$, $\left\langle\left[1,0,1,0\right]\right\rangle$, $\left\langle\left[1,0,1,1\right]\right\rangle$, $\left\langle\left[1,1,0,0\right]\right\rangle$ and $\left\langle\left[0,1,0,0\right]\right\rangle$. Each of these five orbits has size $3$. Now we assume that $G$ is an automorphism group of the design $(\V, \B)$. We do not know the orbits of $\B$ under the action of $G$ as we do not know which $3$-spaces of $\V$ are blocks of $(\V, \B)$. However, all orbits of $\B$ must have size $3$ since no $3$-space of $\V$ can be fixed by $G$. Indeed, a fixed 3-space contains only full point orbits. Since each point orbit is of size 3, and a 3-space contains $7$ points, G cannot fix any 3-space of $\V$. So, and since $|\B|=15$ we can write
\[
  \B=\B_{1}\sqcup \cdots \sqcup \B_{5},
\]
with $\B_{1},\dots,\B_{5}$ the orbits of $\B$ under the action of $G$.
\par We consider the corresponding tactical decomposition matrices $[\rho_{ij}]$ and $[\kappa_{ij}]$. Note that by Lemma \ref{firstequations} we have $\rho_{ij}=\kappa_{ij}$ for all $i,j$, since all point orbits and all block orbits have size 3. First we need to calculate the values $\Lambda_{lrs}$ for $l,r,s\in\{1,\dots,5\}$. An easy calculation yields
\[
  \Lambda_{lrs}=\begin{cases}
                  8&l=r=s\\
                  1&l\neq r\neq s\neq l\\
                  3&\text{else.}
                \end{cases}
\]
Now we apply Lemma \ref{firstequations}, Theorem \ref{secondequations} and Theorem \ref{maintheorem}. Note that $\lambda_{3}=1$, $\lambda_{2}=3$ and $\lambda_{1}=7$. We find
\begin{align}\label{ex1.1}
	\sum^{5}_{j=1}\rho_{ij}&=7,\qquad \sum^{5}_{i=1}\rho_{ij}=\gs{3}{1}{2}=7,\\\label{ex1.2}
	\sum_{j = 1}^{5} \rho_{lj} \rho_{rj} &= 
	\begin{cases}
	  13 & l = r \\
	  9 & l \neq r,
	\end{cases}\\\label{ex1.3}
	\sum_{j=1}^{5} \rho_{lj}\rho_{rj}\rho_{sj} &=
	\begin{cases}
	  31 & l=r=s\\
	  11 &l\neq r\neq s\neq l\\
          15 &\mbox{else.}
	\end{cases}
\end{align}
By looking at the relations (\ref{ex1.1}), (\ref{ex1.2}), (\ref{ex1.3}) for a row (row sum, row sum of squares, row sum of third powers) it follows directly that a row of the matrix $[\rho_{ij}]$ must be a permutation of
$$
\{3, 1, 1, 1, 1\}.
$$
By using relation (\ref{ex1.2}) it can be seen that no two $3$'s can be in the same column. Hence, the matrix $[\rho_{ij}]$ should equal
\[
  \begin{bmatrix}
    3&1&1&1&1\\
    1&3&1&1&1\\
    1&1&3&1&1\\
    1&1&1&3&1\\
    1&1&1&1&3
  \end{bmatrix}
\]
up to a rearrangement of rows and columns, and indeed this matrix satisfies all of the above relations. We find the design $(\V,\B)$ consisting of all $3$-spaces in $\V=\F^{4}_{2}$.

Now we look at the $3-(8,4,\lambda)_{2}$ designs $(\V, \B)$ that were studied in \cite{BKL07}. Here $\V=\F^{8}_{2}$ and $\B$ is a set of $4$-spaces. These $q$-designs admit (by construction) the normaliser of a Singer cycle as an automorphism group; this automorphism group $G$ is generated by the elements
\[
  \begin{bmatrix}
    1&0&0&0&1&0&1&1\\
    0&0&0&0&0&0&0&1\\
    0&1&0&0&1&1&1&0\\
    0&0&0&0&1&0&1&0\\
    0&0&1&0&1&1&0&1\\
    0&0&0&0&0&1&0&0\\
    0&0&0&1&0&1&1&0\\
    0&0&0&0&0&0&1&0
  \end{bmatrix}
  \text{ and }
  \begin{bmatrix}
    0&0&0&0&0&0&0&1\\
    1&0&0&0&0&0&0&0\\
    0&1&0&0&0&0&0&1\\
    0&0&1&0&0&0&0&1\\
    0&0&0&1&0&0&0&1\\
    0&0&0&0&1&0&0&0\\
    0&0&0&0&0&1&0&0\\
    0&0&0&0&0&0&1&0
  \end{bmatrix}\;.
\]
The group $G$ acts transitively on the point set of $\V$. In this case the comments of Remark \ref{transitive} apply and we need only to consider the result from Lemma \ref{firstequations}. The group $G$ has 109 orbits on the 4-spaces of $\F^{8}_{2}$ and we know that a $3-(8,4,\lambda)_{2}$ design which admits $G$ as its automorphism group, is the union of some of these orbits, $\B_{1},\dots,\B_{n}$. Since $|\V_{1}|=|\V|=255$ and $\kappa_{1j}=\gs{4}{1}{2}=15$ we know that $\rho_{1j}=\frac{|\B_{j}|}{17}$.
\par Considering the orbits of the $4$-spaces under the action of $G$, we find that there are 92 orbits having size $120\cdot17$, ten orbits having size $60\cdot17$, five orbits having size $30\cdot17$, one orbit having size $20\cdot17$ and one orbit having size $17$. By \eqref{tdesignissdesign} and Lemma \ref{firstequations} we obtain that
\[
  \sum^{n}_{j=1}\rho_{1j}=\lambda_{1}=127\cdot3\cdot\lambda\equiv\lambda\pmod{10}\;.
\]
Except possibly one, all values $\rho_{ij}$ equal $0$ modulo $10$. Hence, $\lambda$ must equal $1$, $10$, $11$, $20$, $21$, $30$ or $31$. Note that $\lambda=31$ corresponds to a trivial design and that $20$, $21$ and $30$ can only occur as complements of $11$, $10$ and $1$ respectively. In \cite{BKL07} all values from 1 up to 30 were tested for $\lambda$, but using tactical decomposition arguments that search could have been restricted to three values for $\lambda$. This shows the usefulness of this technique. %For $\lambda=11$ (and $\lambda=20$) a $3-(8,4,\lambda)_{2}$ design was constructed in .

\vspace*{0.5cm}
\par \textbf{Acknowledgment:} The research of the first author is supported by the BOF-UGent (Special Research Fund of Ghent University). The research of the second author has been supported by the Croatian Science Foundation under the project 7766.

\end{document}